\tikzset{>=latex,auto}
\newtheorem{theorem}{Theorem}[section]
\newtheorem{lemma}[theorem]{Lemma}
\newtheorem{corollary}[theorem]{Corollary}
\theoremstyle{definition}
\newtheorem{example}[theorem]{Example}
\theoremstyle{remark}
\newtheorem{remark}[theorem]{Remark}
\numberwithin{equation}{section}
\newcommand{\thmref}[1]{Theorem~\ref{#1}}
\newcommand{\remref}[1]{Remark~\ref{#1}}
\newcommand{\lemref}[1]{Lemma~\ref{#1}}
\newcommand{\corref}[1]{Corollary~\ref{#1}}
\newcommand{\figref}[1]{Fig.~\ref{#1}}
\newcommand{\exref}[1]{Example~\ref{#1}}
\newcommand{\vs}{\vspace{6pt}}
\newcommand{\RR}{{\mathbb R}}
\newcommand{\ZZ}{{\mathbb Z}}
\newcommand{\OO}{{\mathscr O}}
\newcommand{\sS}{{\mathscr S}}
\newcommand{\sR}{{\mathscr R}}
\newcommand{\sC}{{\mathscr C}}
\newcommand{\modd}{\ \ (\operatorname{mod} \ \ZZ)}
\newcommand{\con}{\operatorname{const.}}
\newcommand{\mb}{\mathbf{m}_3}
\newcommand{\mc}{\mathbf{m}_4}
\newcommand{\mk}{\mathbf{m}_k}
\newcommand{\des}{\operatorname{des}}
\newcommand{\sig}{\operatorname{sig}}
\newcommand{\dep}{\operatorname{dep}}
\newcommand{\sym}{\operatorname{sym}}
\newcommand{\ve}{\varepsilon}
\newcommand{\knuth}{\genfrac{\langle }{\rangle}{0pt}{}}
\newcommand{\mo}{\operatorname{mod}}
\newcommand{\bit}{\it \bfseries}
\begin{document}
\title[Cyclic permutations]{Cyclic Permutations: Degrees and Combinatorial Types}

\author[S. Zakeri]{Saeed Zakeri}

\address{Department of Mathematics, Queens College of CUNY, 65-30 Kissena Blvd., Queens, New York 11367, USA} 
\address{The Graduate Center of CUNY, 365 Fifth Ave., New York, NY 10016, USA}

\email{saeed.zakeri@qc.cuny.edu}

\date{May 31, 2021}

\begin{abstract}
This note will give an enumeration of \mbox{$n$-cycles} in the symmetric group $\sS_n$ by their degree (also known as their cyclic descent number) and studies similar counting problems for the conjugacy classes of $n$-cycles under the action of the rotation subgroup of $\sS_n$. This is achieved by relating such cycles to periodic orbits of an associated dynamical system acting on the circle. We also compute the mean and variance of the degree of a random $n$-cycle and show that its distribution is asymptotically normal as $n \to \infty$.   
\end{abstract}

\maketitle

\section{Introduction}

The classical Eulerian numbers describe the distribution of descent number in the full symmetric group $\sS_n$ and have been studied extensively for more than a century (see for example \cite{P2} and \cite{St}). Understanding the distribution of descent number in a given conjugacy class of $\sS_n$ is a more subtle problem that was first tackled in the 1990's by Gessel and Reutenauer \cite{GR}, by Diaconis, McGrath, and Pitman \cite{DMP}, and by Fulman \cite{F1}.  \vs

This note will consider a variant of the descent number of a permutation $\nu \in \sS_n$, which we call its {\it degree}, defined by 
$$
\deg(\nu) = \# \big\{ i : \nu(i) > \nu(i+1) \big\}, 
$$
where the integer $i$ is taken modulo $n$. Our terminology is justified by a simple topological interpretation of this quantity, but it turns out that what we call degree in this paper has already been studied in the combinatorics literature under the name {\it cyclic descent number} (see for example \cite{C}, \cite{F2}, \cite{P1}, and \cite{DPS}). The degree has the advantage of being invariant under the left and right actions of the rotation subgroup of $\sS_n$ generated by the cycle $(1 \, 2 \, \cdots \, n)$, and naturally occurs in the study of the combinatorial patterns of periodic orbits of covering maps of the circle (see \cite{M} and \cite{PZ}). Motivated by this connection, we will investigate the distribution of degree in the conjugacy class $\sC_n$ of all $n$-cycles in $\sS_n$. Let $N_{n,d}$ denote the number of $\nu \in \sC_n$ with $\deg(\nu)=d$. In \S \ref{sec:des} we prove 

\begin{theorem}\label{YEK}
For every $d \geq 1$,
$$
N_{n,d} = \sum_{k=1}^d (-1)^{d-k} \binom{n}{d-k} \Delta_n(k),
$$
where 
$$
\Delta_n(k) = \sum_{r|n} \mu \Big( \frac{n}{r} \Big) \Big(\sum_{j=0}^{r-1} k^j \Big)
$$
and $\mu$ is the M\"{o}bius function. 
\end{theorem}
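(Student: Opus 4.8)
The plan is to reduce the assertion to a count of periodic orbits of the $k$-fold covering of the circle. To begin, observe that the stated formula is, after inverting a binomial transform, equivalent to
\begin{equation*}
\Delta_n(k)\;=\;\sum_{d\ge 1}\binom{n+k-d-1}{n-1}\,N_{n,d}\;=\;\#\bigl\{(\sigma;c_1,\dots,c_n):\sigma\in\sC_n,\ c_i\in\ZZ_{\ge 0},\ \deg(\sigma)+\textstyle\sum_i c_i=k\bigr\};
\tag{$\star$}
\end{equation*}
indeed, in terms of the generating functions $A(x)=\sum_{d}N_{n,d}x^{d}$ and $B(x)=\sum_{k\ge 1}\Delta_n(k)x^{k}$, the middle identity reads $B=A(1-x)^{-n}$, so that $A=B(1-x)^{n}$ and therefore $N_{n,d}=[x^{d}]\bigl(B(x)(1-x)^{n}\bigr)=\sum_{k=1}^{d}(-1)^{d-k}\binom{n}{d-k}\Delta_n(k)$. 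Thus it suffices to prove $(\star)$, and for this I would interpret both sides dynamically.

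For the count on the right of $(\star)$, recall the topological meaning of the degree (cf.\ \cite{M}, \cite{PZ}): placing the points of an $n$-cycle $\sigma$ on $\RR/\ZZ$ in cyclic order and joining them by a circle map of \emph{least} degree that realizes the pattern $\sigma$ yields a map of degree exactly $\deg(\sigma)$; moreover, for any $k\ge\deg(\sigma)$ the same pattern is realized by maps of degree $k$, the extra freedom being precisely a choice of how the $k-\deg(\sigma)$ additional full turns are distributed over the $n$ gaps of the orbit — which is the vector $(c_1,\dots,c_n)$. For the left side, take the standard model $\mk(x)=kx\modd$. Its points of period dividing $n$ are the $k^{n}-1$ points $j/(k^{n}-1)$, so by M\"obius inversion there are $\sum_{r\mid n}\mu(n/r)(k^{r}-1)$ of exact period $n$; since the rotation $R\colon x\mapsto x+\tfrac1{k-1}$ commutes with $\mk$ and acts freely on $\RR/\ZZ$, this number is divisible by $k-1$, and the number of $R$-orbits of exact-period-$n$ points of $\mk$ equals $\tfrac1{k-1}\sum_{r\mid n}\mu(n/r)(k^{r}-1)=\Delta_n(k)$.

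It then remains to match these two descriptions orbit by orbit: to an exact-period-$n$ orbit $O$ of $\mk$ one attaches its combinatorial pattern $\sigma\in\sC_n$ (of degree $d\le k$, by the topological picture) together with the distribution $(c_1,\dots,c_n)$ of the remaining $k-d$ turns over the gaps of $O$, and one verifies that this is a complete invariant by reconstructing each point of $O$ from its base-$k$ itinerary, which $(\sigma;c_1,\dots,c_n)$ determines. \emph{This last step is the main obstacle.} The difficulty is to pin down the multiplicity \emph{exactly}: the most naive bookkeeping — cut $\RR/\ZZ$ at the fixed point $0\notin O$, read off $\sigma$, and count the admissible weakly increasing base-$k$ branch sequences compatible with $\sigma$ — produces the \emph{linear} descent number $\des(\sigma)$ and the multiplicity $\binom{n+k-\des(\sigma)-1}{n}$, which is \emph{not} the binomial coefficient appearing in $(\star)$. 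Reconciling this with the rotation-invariant quantity $\deg(\sigma)$, and converting the naive count into the symmetric ``stars and bars'' form $\binom{n+k-d-1}{n-1}$, requires passing carefully between orbits, periodic points, and $R$-orbits, so that the factor $n$ coming from a marked orbit point is correctly absorbed against the factor $k-1$ coming from the free action of $R$; a clean way to organize this is a cycle-lemma argument applied to the admissible itineraries. Granting $(\star)$, the generating-function manipulation of the first paragraph completes the proof.
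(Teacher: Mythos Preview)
Your overall strategy coincides with the paper's: reduce the theorem to the identity $(\star)$ --- which is exactly the paper's \eqref{mame1} --- interpret both sides via periodic orbits of $\mk$, and then invert. The generating-function inversion and the interpretation of $\Delta_n(k)$ as the number of period-$n$ points of $\mk$ modulo the rotation $x\mapsto x+1/(k-1)$ are both correct and match the paper.

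The gap is precisely where you flag it: you have not proved $(\star)$, only reduced to it and sketched a possible cycle-lemma attack. The paper closes this gap differently and more concretely, via \thmref{real} and \corref{nib}, using the deployment-vector count from \cite{PZ}. For a fixed $\nu\in\sC_{n,d}^s$ with signature $(a_1,\dots,a_n)$, the number of period-$n$ orbits of $\mk$ realizing $\nu$ is $\binom{n+k-d}{n}$ if $a_n=1$ and $\binom{n+k-d-1}{n}$ if $a_n=0$ --- essentially the two ``wrong'' binomials you ran into. The resolution is not to repair the count for a single $\nu$ but to sum over the combinatorial type $[\nu]=\{\rho^{-j}\nu\rho^{j}:1\le j\le r\}$, whose members have cyclically shifted signatures; since exactly $(d-1)/s$ of the entries $a_1,\dots,a_r$ equal $1$ and $(n-d+1)/s$ equal $0$, the total over $[\nu]$ collapses to
\[
\frac{d-1}{s}\binom{n+k-d}{n}+\frac{n-d+1}{s}\binom{n+k-d-1}{n}=\frac{k-1}{s}\binom{n+k-d-1}{n-1}.
\]
Summing this over all types and equating with the total number $P_n(k)/n$ of period-$n$ orbits yields $(\star)$. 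So the reconciliation you anticipate --- trading the factor $n$ from marking an orbit point against the factor $k-1$ from the free rotation action --- is accomplished by this signature-averaging over the $\sR_n$-orbit rather than by a cycle lemma on itineraries; this is exactly where the rotation-invariance of $\deg$ enters and where your proposal stops short.
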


This is the analog of the alternating sum formula for Eulerian numbers, and the similar formulas in \cite{DMP} and \cite{F1} for permutations with a given cycle structure and descent number. Our proof makes essential use of a count for the number of period $n$ orbits of the linear endomorphism $\mk(x)=kx \ (\mo \ZZ)$ of the circle $\RR/\ZZ$ that realize the combinatorics of degree $d$ elements in $\sC_n$, developed by C.~L.~Petersen and the author \cite{PZ}. The $\Delta_n(k)$ for $k \geq 2$ can be interpreted as the number of period $n$ points of $\mk$ up to rotation by a \mbox{$(k-1)$-st} root of unity (see \S \ref{keek}, \S \ref{CNQD} and \S \ref{compp}). \vs 

It follows immediately from \thmref{YEK} that the generating function $G_n(x)= \sum_{d=1}^{n-2} N_{n,d} \, x^d$ has the power series expansion 
$$
G_n(x)=(1-x)^n \sum_{i \geq 1} \Delta_n(i) \, x^i.
$$
This result should be viewed as the analog of Carlitz's identity for Eulerian numbers (compare \S \ref{CNQD} and \S \ref{compp}). \vs

It is well known that the descent number of a randomly chosen permutation in $\sS_n$ has mean $(n-1)/2$ and variance $(n+1)/12$ (see for example \cite{Pi}). In \S \ref{sec:stat} we prove  

\begin{theorem}\label{DO}
The degree of a randomly chosen cycle in $\sC_n$ (with respect to the uniform measure) has mean  
$$ 
\frac{n}{2}-\frac{1}{n-1} \qquad \text{if} \ n \geq 3,
$$
and variance
$$
\frac{n}{12}+\frac{n}{(n-1)^2(n-2)} \qquad \text{if} \ n \geq 5.
$$
\end{theorem}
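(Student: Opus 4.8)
The plan is to read the first two moments of the degree off the generating function $G_n(x)=\sum_d N_{n,d}\,x^d$. Write $X$ for the degree of a uniformly random element of $\sC_n$; since $|\sC_n|=(n-1)!$ we have
\[
\E(X)=\frac{G_n'(1)}{G_n(1)},\qquad
\V(X)=\frac{G_n''(1)+G_n'(1)}{G_n(1)}-\left(\frac{G_n'(1)}{G_n(1)}\right)^{\!2},
\]
so everything reduces to evaluating $G_n(1)$, $G_n'(1)$ and $G_n''(1)$.

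The first step is to convert the identity $G_n(x)=(1-x)^n\sum_{i\ge1}\Delta_n(i)\,x^i$ from the Introduction into an honest \emph{polynomial} formula, because the series $\sum_{i\ge1}\Delta_n(i)\,x^i$ diverges at $x=1$ and cannot be differentiated there term by term. Substituting the definition of $\Delta_n$ and interchanging the finite sum over divisors with the sum over $i$ gives
\[
\sum_{i\ge1}\Delta_n(i)\,x^i=\sum_{r\mid n}\mu\!\left(\frac{n}{r}\right)\sum_{j=0}^{r-1}\ \sum_{k\ge1}k^{\,j}x^{k}.
\]
By the classical Eulerian generating function, $\sum_{k\ge1}k^{\,j}x^{k}=B_j(x)/(1-x)^{j+1}$, where $B_j(x)=x\sum_{i}\knuth{j}{i}x^{i}$ is $x$ times the $j$-th Eulerian polynomial and $B_j(1)=j!$. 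Since $j\le r-1\le n-1$ for every divisor $r$ of $n$, the prefactor $(1-x)^n$ clears all denominators, and I obtain the polynomial identity
\[
G_n(x)=\sum_{r\mid n}\mu\!\left(\frac{n}{r}\right)\sum_{j=0}^{r-1}(1-x)^{\,n-1-j}\,B_j(x).
\]

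Next I differentiate this $m$ times, for $m=0,1,2$, and set $x=1$. A summand $(1-x)^{\,n-1-j}B_j(x)$ has nonzero $m$-th derivative at $x=1$ only when $n-1-j\le m$, i.e. $j\ge n-1-m$, which forces $r\ge n-m$. For $m\le 2$ the only divisor of $n$ with $r\ge n-2$ is $r=n$ as soon as $n\ge5$ --- the borderline cases $r=n-1$ and $r=n-2$ can divide $n$ only when $n=2$ and $n\in\{3,4\}$ respectively, which is precisely where the hypotheses $n\ge3$ (for the mean) and $n\ge5$ (for the variance) come from. Hence for such $n$,
\[
G_n(1)=B_{n-1}(1),\qquad G_n'(1)=-B_{n-2}(1)+B_{n-1}'(1),\qquad
G_n''(1)=2\,B_{n-3}(1)-2\,B_{n-2}'(1)+B_{n-1}''(1).
\]
To finish this step I evaluate $B_j,B_j',B_j''$ at $1$: from $B_j(x)=x\,A_j(x)$ and the classical facts that the descent statistic on $\sS_j$ has mean $(j-1)/2$ and variance $(j+1)/12$ (the very facts recalled just before \thmref{DO}), one gets $B_j(1)=j!$, $B_j'(1)=j!\,(j+1)/2$ and $B_j''(1)=j!\,(j+1)(3j-2)/12$ for the values of $j$ that occur here.

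Finally I assemble the pieces. Substituting $j=n-1,\,n-2,\,n-3$ yields $G_n(1)=(n-1)!$, $G_n'(1)=-(n-2)!+\tfrac{n}{2}(n-1)!$, and $G_n''(1)=2(n-3)!-(n-1)!+\tfrac{n(3n-5)}{12}(n-1)!$. Dividing by $(n-1)!$ gives $\E(X)=\tfrac{n}{2}-\tfrac{1}{n-1}$ immediately; plugging into the variance formula, the terms in $1/(n-1)$ cancel against part of $\E(X)^2$, the polynomial-in-$n$ contributions collapse to $n/12$, and one is left with
\[
\V(X)=\frac{n}{12}+\frac{2}{(n-1)(n-2)}-\frac{1}{(n-1)^{2}}=\frac{n}{12}+\frac{n}{(n-1)^{2}(n-2)}.
\]

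The one genuinely delicate point is the opening move: replacing the divergent series by the closed polynomial form so that differentiation at $x=1$ is legitimate. After that, everything is elementary bookkeeping, though one must still check the small cases at the edges of the stated ranges (e.g. $n=3,4$ for the variance), where the extra divisors of $n$ contribute and the clean formula genuinely fails.
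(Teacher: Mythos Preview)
Your proof is correct and takes essentially the same approach as the paper: both expand $G_n(x)$ via Carlitz's identity as a sum of terms $x(1-x)^{n-1-j}A_j(x)$ indexed by $j\le r-1$ with $r\mid n$, observe that for $n\ge 3$ (resp.\ $n\ge 5$) only the top two (resp.\ three) terms---all coming from $r=n$---survive after one (resp.\ two) differentiations at $x=1$, and then read off the mean and variance from the known moments of the Eulerian distribution. The only cosmetic difference is that the paper separates the $r=n$ summand from the rest and packages the remainder as $O((1-x)^2)$ or $O((1-x)^3)$, whereas you keep the full sum and argue directly which $(r,j)$ contribute; your $B_j(x)$ is the paper's $xA_j(x)$.
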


The idea of the proof, inspired by the method of Fulman in \cite{F1}, is to express the generating functions $G_n$ in terms of the generating functions of the Eulerian numbers (the so-called Eulerian polynomials) for which the mean and variance are already known (see \S \ref{asnorm}). \vs 

The following central limit theorem for the degree is also proved in \S \ref{sec:stat}: 

\begin{theorem}\label{SE}
When normalized by its mean and variance, the distribution of $\deg(\nu)$ for $\nu \in \sC_n$ converges to the standard normal distribution as $n \to \infty$.   
\end{theorem}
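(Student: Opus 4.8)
The plan is to use \thmref{YEK} to reduce the statement to the classical asymptotic normality of the Eulerian distribution. Write $A_j(x)=(1-x)^{j+1}\sum_{i\geq1}i^j x^i=\sum_{d}\knuth{j}{d}x^{d+1}$ for the $j$-th Eulerian polynomial, so that $A_j$ has nonnegative coefficients and $A_j(1)=j!$. Expanding $\Delta_n(i)=\sum_{r\mid n}\mu(n/r)\sum_{j=0}^{r-1}i^j$ inside $G_n(x)=(1-x)^n\sum_{i\geq1}\Delta_n(i)x^i$ and interchanging the order of summation gives
$$
G_n(x)=\sum_{r\mid n}\mu\!\Big(\frac nr\Big)\sum_{j=0}^{r-1}A_j(x)\,(1-x)^{n-1-j}.
$$
This is exactly the manipulation carried out for \thmref{DO}, so I would simply quote it from \secref{asnorm}. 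The single term with $(r,j)=(n,n-1)$ equals the Eulerian polynomial $A_{n-1}(x)$, while every other term carries a positive power of $(1-x)$; hence $G_n(x)=A_{n-1}(x)+E_n(x)$, where $E_n$ collects all the remaining terms.

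Next I would estimate the size of $E_n$. Using submultiplicativity of the $\ell^1$-norm of coefficients, $\|A_j(x)(1-x)^m\|_1\leq A_j(1)\,2^m=j!\,2^m$, so the term $(r,j)=(n,n-2)$ dominates and $\|E_n\|_1=O\big((n-2)!\big)$. Since $G_n(1)=A_{n-1}(1)=(n-1)!=|\sC_n|$, this shows that the total variation distance between the law of $\deg(\nu)$ for uniform $\nu\in\sC_n$ and the law of $1+\des(\sigma)$ for uniform $\sigma\in\sS_{n-1}$ is $\|E_n\|_1/(n-1)!=O(1/n)\to0$. Total variation distance is unchanged by the injective affine rescalings $y\mapsto(y-a)/b$, so $(\deg(\nu)-a_n)/b_n$ and $(1+\des(\sigma)-a_n)/b_n$ have the same limiting law for any constants $a_n$ and $b_n>0$. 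Taking $a_n=1+(n-2)/2=n/2$ and $b_n=\sqrt{n/12}$ — the mean and standard deviation of $1+\des$ on $\sS_{n-1}$ — the classical central limit theorem for the Eulerian distribution gives convergence to the standard normal. Finally, by \thmref{DO} the mean and variance of $\deg$ on $\sC_n$ differ from $a_n$ and $b_n^2$ by $O(1/n)$ and by a factor $1+o(1)$ respectively, so Slutsky's theorem lets one replace $a_n,b_n$ by the true mean and standard deviation of $\deg(\nu)$ without changing the limit. This proves \thmref{SE}.

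The generating-function bookkeeping is routine; the two points that need a little care are the error estimate $\|E_n\|_1=O((n-2)!)$ — one must check that the proper divisors $r\mid n$ contribute only $O\big((n/2)!\,2^{n/2}\big)$, which is negligible against $(n-2)!$ because factorials outgrow geometric series and $n$ has $n^{o(1)}$ divisors — and the appeal to the Eulerian central limit theorem together with the verification, via \thmref{DO} and the known first two moments of $\des$ on $\sS_{n-1}$, that the two normalizations agree closely enough. A more self-contained variant would avoid $\sS_{n-1}$ altogether and apply Hwang's quasi-power theorem directly to $G_n(e^{it})/(n-1)!$, but this forces one to control $E_n(e^{it})$ on the scale $t=\Theta(1/\sqrt n)$ — it is not uniformly small on a fixed neighborhood of the origin — so the total-variation route above seems the cleaner option. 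One could equally run the method of moments, extracting the factorial moments of $\deg(\nu)$ from $G_n$ and comparing them termwise with those of the Eulerian distribution; this again reduces to the same error estimate plus classical Eulerian moment asymptotics.
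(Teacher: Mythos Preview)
Your argument is correct, and it takes a genuinely different route from the paper's. The paper works directly with the moment generating function: it writes $M_{X_n}(s)=x^{-\mu_n}G_n(x)/(n-1)!$ with $x=e^{s/\sigma_n}$, bounds $\Delta_n(i)$ above and below by $(1\pm\ve)i^{n-1}$ (\lemref{boz}), compares $\sum i^{n-1}x^i$ to the Gamma integral (\lemref{jag}), and then shows $M_{X_n}(s)\to e^{s^2/2}$ for $s<0$ via a Curtiss-type lemma. Your approach instead isolates $A_{n-1}$ as the dominant term in the decomposition $G_n=\sum_{r\mid n}\mu(n/r)\sum_{j=0}^{r-1}A_j(1-x)^{n-1-j}$, bounds the $\ell^1$-norm of the remainder by $O((n-2)!)$, and deduces that $\deg|_{\sC_n}$ is within total variation $O(1/n)$ of $1+\des|_{\sS_{n-1}}$; the CLT then transfers from the classical Eulerian case and Slutsky handles the normalization. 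Your route is shorter and yields the stronger conclusion of total-variation proximity to the Eulerian law, at the cost of quoting the Eulerian CLT as a black box; the paper's route is more self-contained in that it rebuilds the Gaussian limit from scratch, but is correspondingly more computational. One small point worth making explicit in your write-up: the sum $\sum_{j=0}^{n-2}j!\,2^{n-1-j}$ is $O((n-2)!)$ because the ratio of successive terms is $(j+1)/2$, so the last term dominates with a convergent tail --- you assert this but a one-line justification would be appropriate.
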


Compare \figref{normal}. \vs

\begin{figure}[t]
\centering
\begin{overpic}[width=\textwidth]{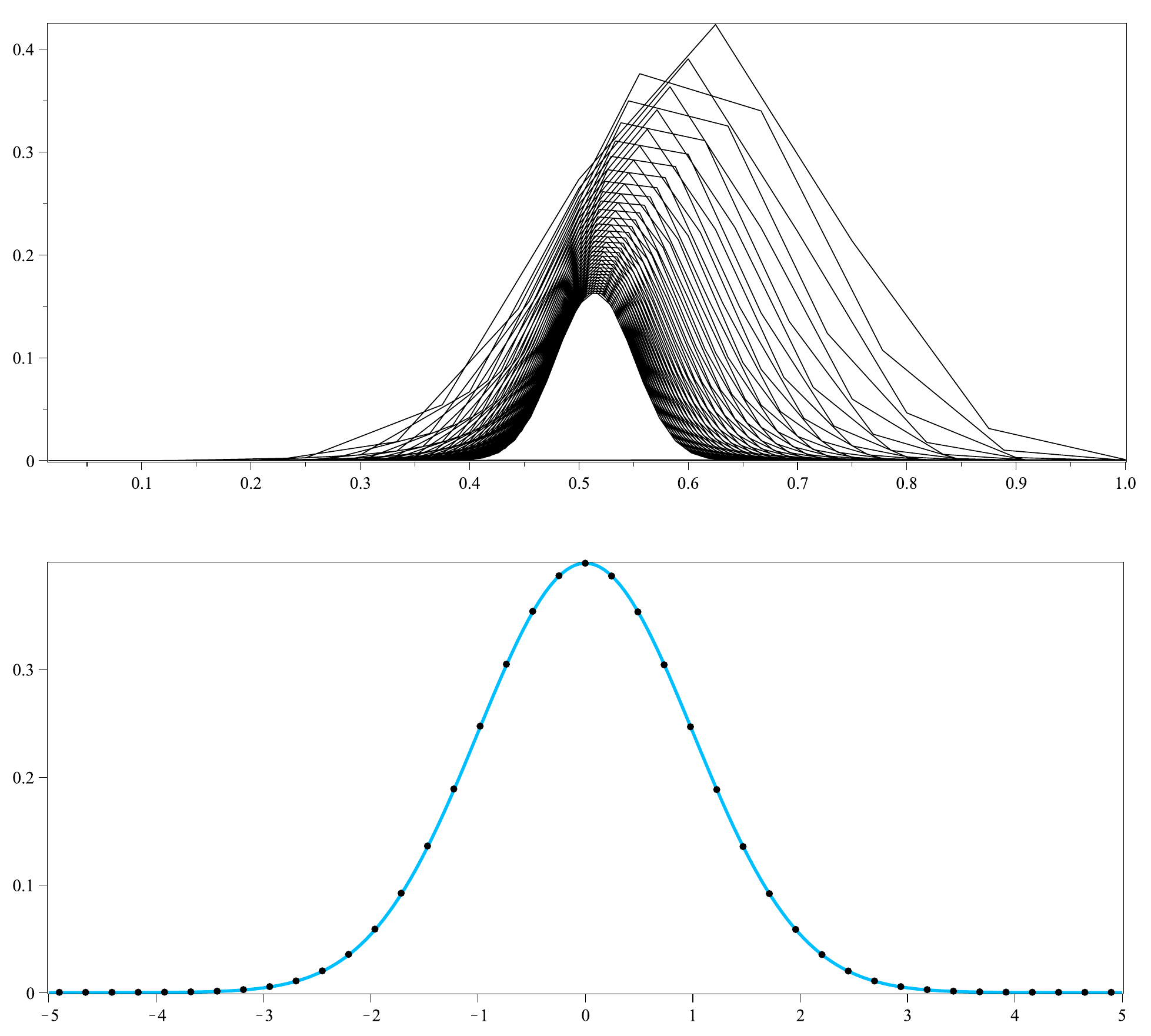}
\end{overpic}
\caption{\sl Illustration of \thmref{SE}. Top: The histogram of the distribution of degree among $n$-cycles, scaled horizontally to $n-2$, for $10 \leq n \leq 70$. The rapid convergence to a bell-shaped density is evident. Bottom: The distribution of degree among $200$-cycles, normalized by its mean and variance (only 41 out of 198 points are visible in the given window). The curve in blue is the standard normal distribution.}  
\label{normal}
\end{figure}

The central limit theorem for the distribution of descent number over $\sS_n$ or a given conjugacy class has been known (see for example the papers of Bender \cite{B} or Harper \cite{H} for the former, and the recent work of Kim and Lee \cite{KL1} for the latter). Our proof follows the strategy of \cite{FKL} and is based on the idea of reducing convergence in distribution to pointwise convergence of the moment generating functions over some non-empty open interval, as utilized in \cite{KL2} (see \S \ref{asnorm}). \vs

Motivated by applications in dynamics, we also study the conjugacy classes of $\sC_n$ under the action of the rotation subgroup of $\sS_n$. Each such class is called a {\it combinatorial type} in $\sC_n$. In \S \ref{sec:so} we count the number of $n$-cycles of a given symmetry order and use it to derive a (known) formula for the number of distinct combinatorial types in $\sC_n$ (compare Theorems \ref{NQR} and \ref{NQ}). This section is elementary and rather independent of the rest of the paper, except for \S \ref{ttnn} where we discuss the problem of counting the number of distinct combinatorial types of a given degree. \vs

A partial motivation for writing the present note was the belief that using dynamical systems could give a different and interesting perspective on the problem of enumeration of $n$-cycles by their degree. After a preliminary version of this paper appeared on arXiv.org, J. Fulman informed me that an alternative proof of \thmref{YEK} can be given using results in his paper [F2]. More generally, he is able to derive similar enumeration results in arbitrary conjugacy classes in $\sS_n$. However, his methods are not elementary or combinatorial, as they rely on representation theory of the Whitehouse module. An interesting open problem is to develop a purely combinatorial approach to our degree counts for $n$-cycles or other conjugacy classes in $\sS_n$. \vs

\noindent
{\it Acknowledgments.} I'm grateful to J. Fulman for his support and interest in this work, and for sharing his insights on the problems discussed here. I also thank anonymous referees for several helpful comments, which in particular brought my attention to some of the previous work on cyclic descents.     
  
\section{Preliminaries} \label{sec:pre}

Fix an integer $n \geq 2$. We denote by $\sS_n$ the group of all permutations of $\{ 1,\ldots, n \}$ and by $\sC_n$ the collection of all $n$-cycles in $\sS_n$. Following the tradition of group theory, we represent $\nu \in \sC_n$ by the symbol
$$
( 1 \ \nu(1) \ \nu^2(1) \ \cdots \ \nu^{n-1}(1) ). 
$$
The {\bit rotation group} $\sR_n$ is the cyclic subgroup of $\sS_n$ generated by the $n$-cycle 
$$
\rho := (1 \, 2 \, \cdots \, n). 
$$
Elements of $\sR_n \cap \sC_n$ are called {\bit rotation cycles}. Thus, $\nu \in \sC_n$ is a rotation cycle if and only if $\nu = \rho^m$ for some integer $1 \leq m < n$ with $\gcd(m,n)=1$. The reduced fraction $m/n$ is called the {\bit rotation number} of $\rho^m$. \vs

The rotation group $\sR_n$ acts on $\sC_n$ by conjugation. We refer to each orbit of this action as a {\bit combinatorial type} in $\sC_n$. The combinatorial type of an $n$-cycle $\nu$ is denoted by $[\nu]$. It is easy to see that $\nu$ is a rotation cycle if and only if $[\nu]$ consists of $\nu$ only. In fact, if $\rho \nu \rho^{-1} = \nu$, then $\nu=\rho^m$ where $m=\nu(n)$. 

\subsection{The symmetry order}

The combinatorial type of $\nu \in \sC_n$ can be described explicitly as follows. Let 
$$
G_\nu := \big\{ \rho^j : \rho^j \nu \rho^{-j}=\nu \big\}
$$ 
be the stabilizer group of $\nu$ under the action of $\sR_n$. We call the order of $G_\nu$ the {\bit symmetry order} of $\nu$ and denote it by $\sym(\nu)$. If $r:=n/\sym(\nu)$, it follows that $G_\nu$ is generated by the power $\rho^r$ and the combinatorial type of $\nu$ is the $r$-element set 
$$
[\nu]= \big\{ \nu, \rho \nu \rho^{-1}, \ldots, \rho^{r-1} \nu \rho^{-(r-1)} \big\}.
$$
Since $\sym(\rho \nu \rho^{-1})=\sym(\nu)$, we can define the symmetry order of a combinatorial type unambiguously as that of any cycle representing it:
$$
\sym([\nu]):=\sym(\nu). 
$$
Evidently there are no $2$- or $3$-cycles of symmetry order $1$, and there is no $4$-cycle of symmetry order $2$. By contrast, it is not hard to see that for every $n \geq 5$ and every divisor $s$ of $n$ there is a $\nu \in \sC_n$ with $\sym(\nu)=s$. \vs

Of the $(n-1)!$ elements of $\sC_n$, precisely $\varphi(n)$ are rotation cycles. Here $\varphi$ is Euler's totient function defined by 
$$
\varphi(n) := \# \, \{ m \in \ZZ: 1 \leq m \leq n \ \text{and} \ \gcd(m,n)=1 \}.  
$$
If $\nu_1, \ldots, \nu_T$ are representatives of the distinct combinatorial types in $\sC_n$, then
$$
(n-1)! = \sum_{\nu_i \in \sR_n } \# [ \nu_i] + \sum_{\nu_i \notin \sR_n } \# [ \nu_i] = \varphi(n)+ \sum_{\nu_i \notin \sR_n } \# [ \nu_i].
$$
When $n$ is a prime number, we have $\varphi(n)=n-1$ and each $\# [\nu_i]$ in the far right sum is $n$. In this case the number of distinct combinatorial types in $\sC_n$ is given by 
\begin{equation}\label{hayoola}
T = (n-1) + \frac{(n-1)!-(n-1)}{n} = \frac{(n-1)!+(n-1)^2}{n}.
\end{equation}
Observe that $T$ being an integer gives a simple proof of Wilson's theorem according to which $(n-1)! = -1 \ (\operatorname{mod} n)$ whenever $n$ is prime.
  
\begin{figure}[t]
\centering
\begin{overpic}[width=\textwidth]{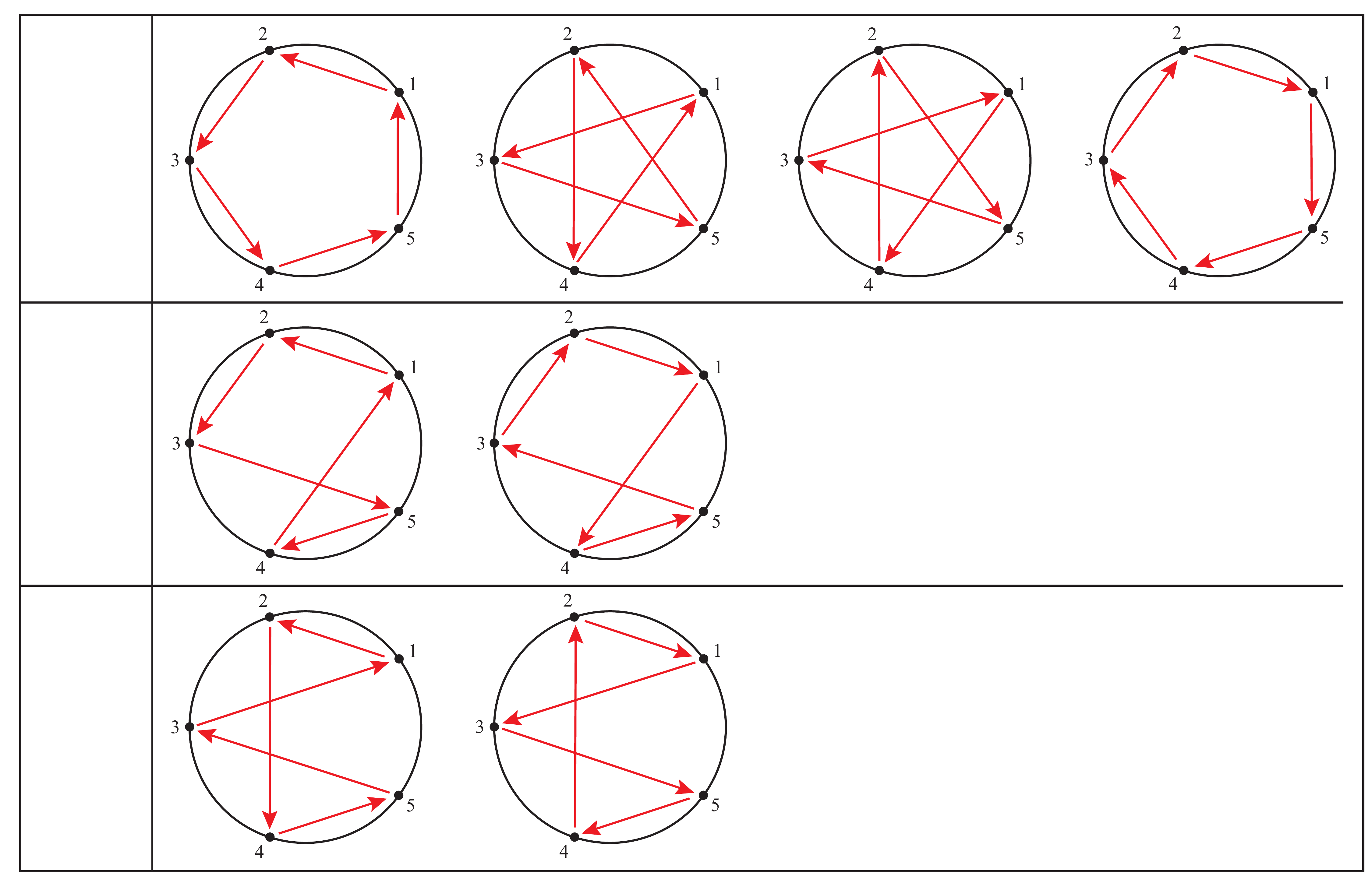}
\put (56,31) {$+$ four rotated copies of each}
\put (56,10) {$+$ four rotated copies of each}
\put (4,10) {$\sC_{5,3}^1$}
\put (4,31) {$\sC_{5,2}^1$}
\put (4,52) {$\sC_{5,1}^5$}
\put (21.5,52) {\footnotesize{$\rho$}}
\put (43.5,52) {\footnotesize{$\rho^2$}}
\put (65.5,52) {\footnotesize{$\rho^3$}}
\put (88,52) {\footnotesize{$\rho^4$}}
\put (21,33) {\footnotesize{$\pi$}}
\put (42,33) {\footnotesize{$\pi^{-1}$}}
\put (24,11) {\footnotesize{$\nu$}}
\put (46,11) {\footnotesize{$\nu^{-1}$}}
\end{overpic}
\caption{\sl The decomposition of $\sC_5$ into subsets $\sC_{5,d}^s$ of cycles with degree $d$ and symmetry order $s$, where the only admissible pairs are $(d,s)=(1,5), (2,1), (3,1)$. See Examples \ref{cyc5} and \ref{ajab}.}  
\label{c5}
\end{figure}

\begin{example}\label{cyc5}
The $4!=24$ cycles in $\sC_5$ fall into $(4!+4^2)/5=8$ distinct combinatorial types. The $4$ rotation cycles 
\begin{align*}
\rho & = (1 \, 2 \, 3 \, 4 \, 5) & \rho^2 & = (1 \, 3 \, 5 \, 2 \, 4) \\
\rho^3 & = (1 \, 4 \, 2 \, 5 \, 3) & \rho^4 & = (1 \, 5 \, 4 \, 3 \, 2) 
\end{align*}
(of rotation numbers $1/5, 2/5, 3/5, 4/5$) form $4$ distinct combinatorial types. The remaining $20$ cycles have symmetry order $1$, so they fall into $4$ combinatorial types each containing $5$ elements. These types are represented by
\begin{align*}
\pi & = (1 \, 2 \, 3 \, 5 \, 4) & \pi^{-1} & =(1 \, 4 \, 5 \, 3 \, 2) \\ 
\nu & =(1 \, 2 \, 4 \, 5 \, 3) &  \nu^{-1} & =(1 \, 3 \, 5 \, 4 \, 2). 
\end{align*}
Compare \figref{c5}.  
\end{example}

\subsection{Descent number vs. degree}\label{DNUM}

A permutation $\nu \in \sS_n$ has a {\bit descent} at $i \in \{ 1, \ldots, n-1 \}$ if $\nu(i)>\nu(i+1)$. The total number of such $i$ is called the {\bit descent number} of $\nu$:
$$
\des(\nu) := \# \big\{ 1 \leq i \leq n-1  : \nu(i) > \nu(i+1) \big\} 
$$
Note that $0 \leq \des(\nu) \leq n-1$. The descent number is a basic tool in enumerative combinatorics (see for example \cite{St}). \vs

In this paper we will be working with a rotationally invariant version of the descent number which we call {\bit degree}.\footnote{As noted in the introduction, our degree is synonymous to what combinatorists have called ``cyclic descent number'' in recent years. Somewhat unfortunately, this same invariant is referred to as ``descent number'' in \cite{PZ}.} It simply amounts to counting $i=n$ as a descent if $\nu(n)>\nu(1)$:
$$
\deg(\nu) := \begin{cases} \des(\nu) & \quad \text{if} \ \nu(n)<\nu(1) \\ \des(\nu)+1 & \quad \text{if} \ \nu(n)>\nu(1). \end{cases} 
$$
The terminology comes from the following topological characterization (see \cite{M} and \cite{PZ}): Take any set $\{ x_1, \ldots, x_n \}$ of distinct points on the circle in positive cyclic order. Then $\deg(\nu)$ is the minimum degree of a continuous covering map $f: \RR/\ZZ \to \RR/\ZZ$ which acts on this set as the permutation $\nu$ in the sense that $f(x_i)=x_{\nu(i)}$ for all $i$. A simple model which realizes this minimum degree is the covering map that sends each counter-clockwise arc $[x_i,x_{i+1}]$ affinely onto $[x_{\nu(i)},x_{\nu(i+1)}]$. \vs 

\begin{example}\label{ajab}
The cycle $\nu=(1 \, 2 \, 4 \, 5 \, 3) \in \sC_5$ has descents at $i=2$, $i=4$ and $i=5$, so $\deg(\nu)=3$. The eight representative cycles in $\sC_5$ described in \exref{cyc5} have the following degrees:
\begin{align*}
& \deg(\rho)=\deg(\rho^2)=\deg(\rho^3)=\deg(\rho^4)=1, \\
& \deg(\pi)=\deg(\pi^{-1})=2, \\
& \deg(\nu)=\deg(\nu^{-1})=3.
\end{align*}
The four cycles of degree $1$ have symmetry order $5$, and the remaining cycles have symmetry order $1$. Compare \figref{c5}.
\end{example} 

The following statement summarizes the basic properties of degree for cycles: 

\begin{theorem}\label{degp}
Let $\nu \in \sC_n$ with $\sym(\nu)=s$ and $\deg(\nu)=d$. \vs
\begin{enumerate}
\item[(i)]
$1 \leq d \leq n-2$ if $n \geq 3$. \vs
\item[(ii)]
$d=1 \Longleftrightarrow s=n \Longleftrightarrow \nu \in \sR_n$. \vs
\item[(iii)]
$s | \gcd(n,d-1)$. \vs
\item[(iv)]
$\deg(\rho \nu) = \deg(\nu \rho) = \deg(\rho \nu \rho^{-1})=d$.  
\end{enumerate}
\end{theorem}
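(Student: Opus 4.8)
The plan is to reduce all four statements to the cyclic-descent description of \S\ref{DNUM}: reading indices modulo $n$, $d = \deg(\nu) = \#\{i \in \ZZ/n\ZZ : \nu(i) > \nu(i+1)\}$, so that the number of \emph{cyclic ascents} is $n-d$. I would prove (iv) first, as it is the cheapest. Passing from $\nu$ to $\nu\rho$ or to $\rho\nu\rho^{-1}$ only reindexes positions, since $(\nu\rho)(i) = \nu(i+1)$ and $(\rho\nu\rho^{-1})(i) = \rho(\nu(i-1))$; thus, up to a cyclic shift of the sequence of values and (in the second case) a global shift $v \mapsto v+1 \ (\operatorname{mod}\ n)$ of the values, the underlying cyclic sequence is unchanged, and passing to $\rho\nu$ is exactly such a global shift of the values. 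A cyclic shift of the sequence plainly preserves the number of cyclic descents, while a global shift $v \mapsto v+1 \ (\operatorname{mod}\ n)$ alters the descent/ascent status of exactly two consecutive pairs --- the pair immediately following the entry $n$ (a descent turning into an ascent) and the pair immediately preceding it (an ascent turning into a descent) --- so again the count does not change. (Alternatively one may argue topologically: the affine model of \S\ref{DNUM} realizes $\nu$ with degree $d$, the rotation by $1/n$ realizes $\rho$ with degree $1$, and composing with it on the left or right, or conjugating by it, realizes $\rho\nu$, $\nu\rho$, $\rho\nu\rho^{-1}$ with degree $1\cdot d$; since these operations are invertible, the minimal degrees agree.)

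For (i), the position of the value $n$ is always a cyclic descent and the position of the value $1$ always a cyclic ascent, so $1 \le d \le n-1$ whenever $n \ge 2$. To sharpen the upper bound for $n \ge 3$ I would rule out $d = n-1$: if $\nu$ had a unique cyclic ascent, say at position $a$, then reading the values around the circle starting from $a+1$ forces $\nu(a+j) = n+1-j$ for $j = 1, \ldots, n$, and a short computation with this formula --- in which the index arithmetic modulo $n$ telescopes away --- yields $\nu^2 = \id$; hence $\nu$ has order at most $2$ and cannot be an $n$-cycle, a contradiction, so $d \le n-2$. For (ii), the chain $s = n \iff [\nu] = \{\nu\} \iff \rho\nu\rho^{-1} = \nu \iff \nu = \rho^{\nu(n)} \iff \nu \in \sR_n$ is already in \S\ref{sec:pre}, and $d = 1 \iff \nu \in \sR_n$ is obtained the same way as the bound just discussed: a rotation cycle $\rho^m$ has value sequence $m+1, \ldots, n, 1, \ldots, m$ with the single cyclic descent ``$n$ then $1$'', so $d = 1$; conversely $d = 1$ means a unique cyclic descent, at some $a$, which forces $\nu(a+j) = j$ for all $j$, i.e. $\nu = \rho^{-a} \in \sR_n$ (the coprimality of $a$ and $n$ being automatic because $\nu \in \sC_n$).

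The substance of the theorem is (iii). That $s \mid n$ is immediate from $G_\nu \le \sR_n$ and Lagrange, so I focus on $s \mid (d-1)$; assume $s \ge 2$ and set $r := n/s < n$, so that $\rho^r\nu\rho^{-r} = \nu$, i.e. $\nu(i+r) \equiv \nu(i)+r \ (\operatorname{mod}\ n)$ for every $i$. I would introduce the lifted difference $\delta(i) \in \{1, \ldots, n-1\}$, namely the residue of $\nu(i+1) - \nu(i)$ modulo $n$; then $\delta(i) = \nu(i+1) - \nu(i) + n\,[\nu(i) > \nu(i+1)]$, so summing over $\ZZ/n\ZZ$ telescopes to $\sum_i \delta(i) = nd$. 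The equivariance relation gives $\delta(i+r) \equiv \delta(i) \ (\operatorname{mod}\ n)$, hence $\delta(i+r) = \delta(i)$ since both lie in $\{1, \ldots, n-1\}$; so $\delta$ is $r$-periodic and the sum of any $r$ consecutive values equals $nd/s = rd$. Telescoping one such block, $rd = \big(\nu(i+r) - \nu(i)\big) + n\sum_{j=0}^{r-1}[\nu(i+j) > \nu(i+j+1)]$, while $\nu(i+r) - \nu(i)$ equals $r$ or $r-n$; therefore $r(d-1) \equiv 0 \ (\operatorname{mod}\ n)$, and since $n = rs$ this forces $s \mid (d-1)$. (A quicker topological proof: conjugating the affine model $f$ of $\nu$ by the rotation by $r/n$ produces an affine realization of $\rho^r\nu\rho^{-r} = \nu$ and hence equals $f$, so $f$ commutes with that rotation; lifting $f$ to $F\colon \RR \to \RR$ with $F(x+1) = F(x)+d$, the commutation forces $F(x+r/n) = F(x) + r/n + c$ for a constant integer $c$, and iterating $s$ times, using $s\cdot r/n = 1$, gives $d = 1 + sc$.) I expect this step to be the main obstacle --- one must hit on the right auxiliary quantity and then verify with care that it is genuinely $r$-periodic and that the telescoping constant is controlled; the remaining arguments, including the $\nu^2 = \id$ identity in (i), are elementary bookkeeping once the cyclic-descent viewpoint is fixed.
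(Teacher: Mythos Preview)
Your argument is correct in every part. The combinatorial treatment of (iv) via the two swapped pairs at the position of the value $n$, the $\nu^2=\id$ contradiction for (i), and especially the $r$-periodic difference function $\delta$ for (iii) all go through cleanly; the alternative topological proofs you sketch are also sound once one fixes the point set to be $\{j/n\}$ so that the rotation by $r/n$ preserves it and the conjugated affine model literally coincides with $f$.

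As for comparison with the paper: there is essentially nothing to compare, because the paper does not prove \thmref{degp}. It declares (i), (ii), (iv) ``rather straightforward'' and refers all four parts to \cite[Lemma 2.5, Theorem 2.8, Theorem 6.4]{PZ} (and to \cite{K} for (ii) and (iv)). Your write-up therefore supplies a self-contained elementary proof where the paper only gives citations. In particular, your proof of (iii) via the lifted increments $\delta(i)$ is a clean combinatorial substitute for the dynamical argument in \cite{PZ}; the topological version you sketch (lifting the affine model and reading $d=1+sc$ from the commutation with rotation by $r/n$) is closer in spirit to the approach of that reference.
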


Apart from (iii) the other claims are rather straightforward; see \cite[Lemma 2.5, Theorem 2.8, and Theorem 6.4]{PZ}. I'm informed by a referee that parts (ii) and (iv) also appear in \cite[Lemma 1]{K}. \vs 

Observe that by (iv), the degree of a combinatorial type is well-defined:
$$
\deg([\nu]):=\deg(\nu).
$$

\begin{example}\label{hasht}
Take $\nu \in \sC_8$ with $\sym(\nu)=s$ and $\deg(\nu)=d$. By the above theorem $1 \leq d \leq 6$, with $d=1$ if and only if $s=8$. Moreover, $s$ is a common divisor of $8$ and $d-1$. It follows that the only admissible pairs $(d,s)$ are $(1,8), (2,1), (3,1), (3,2), (4,1), (5,1), (5,2), (5,4), (6,1)$.   
\end{example} 

\subsection{Decompositions of $\sC_n$}

Fix $n \geq 3$ and consider the following cross sections of $\sC_n$ by the symmetry order and degree:
\begin{align*}
\sC_n^s & := \{ \nu \in \sC_n : \sym(\nu)=s \} \\
\sC_{n,d} & := \{ \nu \in \sC_n : \deg(\nu)=d \} \\
\sC_{n,d}^s & := \sC_n^s \cap \sC_{n,d}.
\end{align*}
Observe that in our notation the symmetry order always appears as a superscript and the degree as a subscript after $n$. By \thmref{degp}, 
$$
\sC_n^n = \sC_{n,1} = \sC_{n,1}^n = \sC_n \cap \sR_n
$$
and we have the decompositions  
\begin{align*}
\sC_n & = \bigcup_{s|n} \sC_n^s = \bigcup_{d=1}^{n-2} \sC_{n,d} & & \\
\sC_n^s & = \bigcup_{j=1}^{\lfloor (n-3)/s \rfloor} \sC_{n,js+1}^s & & \text{if} \ s|n, \ s<n \\
\sC_{n,d} & = \bigcup_{s|\gcd(n,d-1)} \sC_{n,d}^s & & \text{if} \ 2 \leq d \leq n-2.
\end{align*}
Hence the cardinalities
\begin{align*}
N_n^s & := \# \, \sC_n^s \\
N_{n,d} & := \# \, \sC_{n,d} \\
N_{n,d}^s & := \# \, \sC_{n,d}^s
\end{align*}
satisfy the following relations: 
\begin{align}
N_n^n & = N_{n,1} = N_{n,1}^n = \varphi(n) & & \notag\\ 
(n-1)! & = \sum_{s|n} N_n^s = \sum_{d=1}^{n-2} N_{n,d} \notag \\
N_n^s & = \sum_{j=1}^{\lfloor (n-3)/s \rfloor} N_{n,js+1}^s & & \text{if} \ s|n, \ s<n \label{foo} \\
N_{n,d} & = \sum_{s|\gcd(n,d-1)} N_{n,d}^s & & \text{if} \ 2 \leq d \leq n-2. \label{loo}  
\end{align}
Let us also consider the counts for the corresponding combinatorial types
\begin{align*}
T_n & := \# \, \{ [\nu]: \nu \in \sC_n \} \\
T_n^s & := \# \, \{ [\nu]: \nu \in \sC_n^s \} \\
T_{n,d} & := \# \, \{ [\nu]: \nu \in \sC_{n,d} \} \\
T_{n,d}^s & := \# \, \{ [\nu]: \nu \in \sC_{n,d}^s \}. 
\end{align*}
Evidently
$$
T_{n,d}^s = \frac{s}{n} \ N_{n,d}^s \qquad \text{and} \qquad T_n^s = \frac{s}{n} \ N_n^s
$$
and we have the following relations: 
\begin{align}
T_n^n & = T_{n,1} =T_{n,1}^n = \varphi(n) \notag\\ 
T_n & = \frac{1}{n} \sum_{s|n} s N_n^s \label{soo} \\
T_{n,d} & = \frac{1}{n} \sum_{s|\gcd(n,d-1)} s N_{n,d}^s \qquad \text{if} \ 2 \leq d \leq n-2.\notag
\end{align}
Of course knowing the joint distribution $N_{n,d}^s$ would allow us to count all the $N$'s and $T$'s. However, finding an closed formula for $N_{n,d}^s$ seems to be difficult (a sample computation can be found in \S \ref{ttnn}). In \S \ref{subsec:sym} we derive a formula for $N_n^s$ by a direct count which in turn leads to a formula for $T_n$ (see Theorems \ref{NQR} and \ref{NQ}). In \S \ref{CNQD} we find a formula for $N_{n,d}$ indirectly by relating cycles in $\sC_{n,d}$ to periodic orbits of an associated dynamical system acting on the circle (see \thmref{jeeg}). 

\section{The distribution of symmetry order} \label{sec:so}

\subsection{The numbers $N_n^s$}\label{subsec:sym} 

We begin with the simplest of our counting problems, that is, finding a formula for $N_n^s$. We will make use of the M\"{o}bius inversion formula
\begin{equation}\label{MIF}
g(m) = \sum_{k|m} f(k) \quad \Longleftrightarrow \quad f(m)= \sum_{k|m} \mu(k) \, g\Big( \frac{m}{k} \Big)
\end{equation}
on a pair of arithmetical functions $f,g$. Here $\mu$ is the M\"{o}bius function uniquely determined by the conditions $\mu(1):=1$ and $\sum_{k|m} \mu(k)=0$ for $m>1$. Applying \eqref{MIF} to the relation
$$
m = \sum_{k|m} \varphi(k)
$$ 
gives the classical identity
\begin{equation}\label{muu}
\varphi(m) = \sum_{k|m} \frac{m}{k} \, \mu(k) = \sum_{k|m} k \mu\Big( \frac{m}{k} \Big).
\end{equation}

\begin{theorem}\label{NQR}
For every $n \geq 2$ and every divisor $s$ of $n$,
\begin{equation}\label{ninu}
N_n^s = \frac{1}{n} \sum_{j|\frac{n}{s}} \mu (j) \, \varphi(sj) \, (sj)^{\frac{n}{sj}} \Big( \frac{n}{sj} \Big)!
\end{equation}
\end{theorem}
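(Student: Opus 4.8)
The plan is to count first the $n$-cycles whose symmetry order is a multiple of $s$, and then to extract $N_n^s$ from these by M\"obius inversion over the divisors of $n/s$. Put $q := n/s$. Since the centralizer of an $n$-cycle $\nu$ in $\sS_n$ is precisely the cyclic group $\langle\nu\rangle$, and $\langle\rho^q\rangle$ is the unique order-$s$ subgroup of $\langle\rho\rangle$, the relation $\rho^q \in G_\nu = \langle\rho\rangle\cap\langle\nu\rangle$ holds if and only if $s \mid \sym(\nu)$. Consequently the quantity
$$
M_n^s := \#\{\nu\in\sC_n : \nu\rho^q=\rho^q\nu\}
$$
satisfies $M_n^s=\sum_{j\mid n/s}N_n^{sj}$, and applying the M\"obius inversion formula \eqref{MIF} to the arithmetical functions $m\mapsto N_n^{n/m}$ and $m\mapsto M_n^{n/m}$ on the divisors of $n/s$ gives $N_n^s=\sum_{j\mid n/s}\mu(j)\,M_n^{sj}$. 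It therefore suffices to prove
$$
M_n^s=(q-1)!\,\varphi(s)\,s^{q-1}=\frac1n\,\varphi(s)\,s^{q}\,q!,
$$
for then substituting this identity (with $s$ replaced by $sj$ and $q$ by $n/(sj)$) into the inversion formula produces exactly \eqref{ninu}.

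To evaluate $M_n^s$, observe that $\rho^q$ acts on $\{1,\ldots,n\}$, with indices read modulo $n$, by $x\mapsto x+q$, and that its $q$ cycles are the cosets $F_0,\ldots,F_{q-1}$ of the cyclic subgroup $\{0,q,2q,\ldots,(s-1)q\}$ of order $s$. A permutation $\nu$ commutes with $\rho^q$ exactly when $\nu(x+q)=\nu(x)+q$ for all $x$; such a $\nu$ induces a permutation $\bar\nu$ of the $q$-element set $\{F_0,\ldots,F_{q-1}\}$, and, identifying each $F_i$ with $\ZZ/s\ZZ$ in the obvious way, it acts on $F_i$ as a translation $x\mapsto x+b_i$ for some $b_i\in\ZZ/s\ZZ$. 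The pair $\bigl(\bar\nu,(b_0,\ldots,b_{q-1})\bigr)$ determines $\nu$ uniquely, and every such pair arises, so counting commuting $\nu$ is the same as counting pairs. Now $\nu$ is transitive only if $\bar\nu$ is a $q$-cycle; granting this, $\nu^q$ acts on every fiber as translation by the total twist $t:=b_0+\cdots+b_{q-1}$, and a short check then shows each orbit of $\nu$ has size $n/\gcd(t,s)$, so $\nu$ is an $n$-cycle precisely when $\gcd(t,s)=1$. There are $(q-1)!$ choices for the $q$-cycle $\bar\nu$, and for each residue class $t$ coprime to $s$ there are exactly $s^{q-1}$ tuples $(b_i)$ with $\sum_i b_i\equiv t\pmod s$; hence $M_n^s=(q-1)!\,\varphi(s)\,s^{q-1}$, as desired.

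The degenerate cases need no special care: when $s=n$ we have $q=1$ and the formula reads $M_n^n=\varphi(n)$, in agreement with the fact that the $n$-cycles commuting with $\rho$ are exactly the $\varphi(n)$ rotation cycles. I expect the one genuinely delicate point to be the cycle-structure analysis of the second paragraph --- confirming that a lift $\bigl(\bar\nu,(b_i)\bigr)$ with $\bar\nu$ a $q$-cycle yields a \emph{single} $n$-cycle exactly when the total twist is coprime to $s$, and that distinct pairs give distinct permutations. The reduction to $M_n^s$ through centralizers, the M\"obius inversion, and the final substitution are all routine.
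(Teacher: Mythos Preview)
Your proof is correct and follows essentially the same strategy as the paper: count the $n$-cycles commuting with $\rho^{n/s}$ to obtain $\frac{1}{n}\varphi(s)\,s^{n/s}(n/s)!$, then M\"obius invert over the divisors of $n/s$. The only difference is in the execution of the count---the paper works directly with the cycle notation $(\nu_1\,\nu_2\,\cdots\,\nu_n)$ and the relation $\rho^r(\nu_i)=\nu_{\rho^m(i)}$, whereas you parametrize the centralizer of $\rho^q$ as a wreath product via pairs $(\bar\nu,(b_i))$ and identify the $n$-cycles by the coprimality of the total twist; these are two phrasings of the same combinatorics.
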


When $s=n$ the formula reduces to $N_n^n= (1/n) \mu(1) \varphi(n) n = \varphi(n)$ which agrees with our earlier count. 

\begin{proof}
Set $r:=n/s$. We have $\rho^r \nu \rho^{-r}=\nu$ if and only if $\sym(\nu)$ is a multiple of $s$ if and only if $\nu \in \sC_n^{n/j}$ for some $j|r$. Denoting $\nu$ by $(\nu_1 \ \nu_2 \ \cdots \ \nu_n)$, this condition can be written as 
$$
(\rho^r(\nu_1) \ \rho^r(\nu_2) \ \cdots \ \rho^r(\nu_n)) = (\nu_1 \ \nu_2 \ \cdots \ \nu_n), 
$$
which holds if and only if there is an integer $m$ such that 
\begin{equation}\label{yek}
\rho^r(\nu_i) = \nu_{\rho^m(i)} \qquad \text{for all} \ i.
\end{equation}
The rotations $\rho^r: i \mapsto i+r$ and $\rho^m: i \mapsto i+m \  (\mo n)$ have orders $n/\gcd(r,n)=n/r$ and $n/\gcd(m,n)$ respectively. By \eqref{yek}, these orders are equal, hence 
$$
r=\gcd(m,n).
$$
Setting $t:=m/r$ gives $\gcd(t,s)=1$, so there are at most $\varphi(s)$ possibilities for $t$ and therefore for $m$. The action of the rotation $\rho^m$ partitions $\ZZ/ n\ZZ$ into $r$ disjoint orbits each consisting of $s$ elements and these $r$ orbits are represented by $1, \ldots, r$. In fact, if 
$$
i+ \ell m = i'+ \ell' m \ (\mo n) \quad \text{for some} \ 1 \leq i,i' \leq r \ \text{and} \ 1 \leq \ell, \ell' \leq s,
$$  
then $i-i' = m(\ell'-\ell) \ (\mo n)$ so $i = i' \ (\mo r)$ which gives $i=i'$. Moreover, $\ell m =  \ell' m \ (\mo n)$ so $\ell t = \ell' t \ (\mo s)$. Since $\gcd(t,s)=1$, this implies $\ell = \ell' \ (\mo s)$ which shows $\ell=\ell'$. \vs

Now \eqref{yek} shows that for each of the $\varphi(s)$ choices of $m$, the cycle $\nu$ is completely determined by the integers $\nu_1, \ldots, \nu_r$, and different choices of $m$ lead to different cycles. We may always assume $\nu_1=1$. This leaves $n-s$ choices for $\nu_2$ (corresponding to the elements of $\{ 1, \ldots, n \}$ that are not in the orbit of $\nu_1=1$ under $\rho^m$), $n-2s$ choices for $\nu_3$, $\ldots$ and $n-(r-1)s=s$ choices for $\nu_r$. Thus, the total number of choices for $\nu$ is
$$
\varphi(s) (n-s)(n-2s) \cdots s = \varphi(s)\, s^{r-1} \, (r-1)! = \frac{1}{n} \varphi(s) \, s^r \, r!
$$  
This proves 
$$
\sum_{j|r} N_n^{n/j} = \frac{1}{n} \varphi \Big( \frac{n}{r} \Big) \Big( \frac{n}{r} \Big)^r r!
$$      
An application of the M\"{o}bius inversion formula \eqref{MIF} then gives
\begin{align*}
N_n^s = N_n^{n/r} & = \frac{1}{n} \sum_{j|r} \mu (j) \, \varphi\Big( \frac{nj}{r} \Big) \Big( \frac{nj}{r} \Big)^{\frac{r}{j}} \Big( \frac{r}{j} \Big)! \\
& =\frac{1}{n} \sum_{j|\frac{n}{s}} \mu (j) \, \varphi(sj) \, (sj)^{\frac{n}{sj}} \Big( \frac{n}{sj} \Big)! \qedhere
\end{align*}
\end{proof}

Table \ref{tab1} shows the values of $N_n^s$ for $2 \leq n \leq 15$. Notice that $N_2^1=N_3^1=N_4^2=0$ but all other values are positive. Moreover, as $n$ gets larger the distribution $N_n^s$ appears to be overwhelmingly concentrated at $s=1$. In fact, an elementary exercise gives the asymptotic estimate $N_n^1 \sim (n-1)!$ as $n \to \infty$ (compare \thmref{asymp} below for a similar analysis). This justifies the intuition that the chance of a randomly chosen $n$-cycle having any non-trivial rotational symmetry tends to zero as $n \to \infty$. 

{\tiny
\begin{table}
\centering
\begin{tabular}{r|crrrrrrrrrrrrrrr}
\toprule
\diagbox{$n$}{$s$} & & $1$ & $2$ & $3$ & $4$ & $5$ & $6$ & $7$ & $8$ & $9$ & $10$ & $11$ & $12$ & $13$ & $14$ & $15$ \\ 
\midrule
$2$ & & $0$ & $1$ & & & & & & & & & & & & & \\  
$3$ & & $0$ & $-$ & $2$ & & & & & & & & & & & & \\  
$4$ & & $4$ & $0$ & $-$ & $2$ & & & & & & & & & & & \\  
$5$ & & $20$ & $-$ & $-$ & $-$ & $4$ & & & & & & & & & & \\  
$6$ & & $108$ & $6$ & $4$ & $-$ & $-$ & $2$ & & & & & & & & & \\  
$7$ & & $714$ & $-$ & $-$ & $-$ & $-$ & $-$ & $6$ & & & & & & & & \\  
$8$ & & $4992$ & $40$ & $-$ & $4$ & $-$ & $-$ & $-$ & $4$ & & & & & & &\\  
$9$ & & $40284$ & $-$ & $30$ & $-$ & $-$ & $-$ & $-$ & $-$ & $6$ & & & & & &\\  
$10$ & & $362480$ & $380$ & $-$ & $-$ & $16$ & $-$ & $-$ & $-$ & $-$ & $4$ & & & & & \\ 
$11$ & & $3628790$ & $-$ & $-$ & $-$ & $-$ & $-$ & $-$ & $-$ & $-$ & $-$ & $10$ & & & & \\ 
$12$ & & $39912648$ & $3768$ & $312$ & $60$ & $-$ & $8$ & $-$ & $-$ & $-$ & $-$ &  $-$ & $4$ & & & \\ 
$13$ & & $479001588$ & $-$ & $-$ & $-$ & $-$ & $-$ & $-$ & $-$ & $-$ & $-$ & $-$ & $-$ & $12$ & & \\ 
$14$ & & $6226974684$ & $46074$ & $-$ & $-$ & $-$ & $-$ & $36$ & $-$ & $-$ & $-$ & $-$ & $-$ & $-$ & $6$ & \\ 
$15$ & & $87178287120$ & $-$ & $3880$ & $-$ & $192$ & $-$ & $-$ & $-$ & $-$ & $-$ & $-$ & $-$ & $-$ & $-$ & $8$ \\
\bottomrule 
\end{tabular}
\vspace*{2mm}
\caption{\sl The distributions $N_n^s$ for $2 \leq n \leq 15$.}
\label{tab1}
\end{table}
}

\subsection{The numbers $T_n$} 

The count \eqref{ninu} led us to the following formula for the number of distinct combinatorial types of $n$-cycles. It turns out that this formula is not new: It appears in the {\it On-line Encyclopedia of Integer Sequences} as the number of $2$-colored patterns of an $n \times n$ chessboard \cite{Sl}.  
 
\begin{theorem}\label{NQ}
For every $n \geq 2$, 
\begin{equation}\label{tq}
T_n = \frac{1}{n^2} \sum_{j|n} (\varphi(j))^2 \, j^{\frac{n}{j}} \Big( \frac{n}{j} \Big)! 
\end{equation}
\end{theorem}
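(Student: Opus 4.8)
The plan is to combine the relation \eqref{soo}, which writes $T_n$ as a weighted sum of the symmetry-order counts $N_n^s$, with the closed formula \eqref{ninu} for $N_n^s$ supplied by \thmref{NQR}, and then to collapse the resulting double sum by a change of summation variable together with one appeal to the classical identity \eqref{muu}. So no new idea is needed beyond bookkeeping with divisor sums.

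First I would substitute \eqref{ninu} into \eqref{soo} to obtain
$$
T_n = \frac{1}{n} \sum_{s \mid n} s \, N_n^s = \frac{1}{n^2} \sum_{s \mid n} \ \sum_{j \mid \frac{n}{s}} s \, \mu(j) \, \varphi(sj) \, (sj)^{\frac{n}{sj}} \Big( \frac{n}{sj} \Big)!.
$$
The pairs $(s,j)$ with $s \mid n$ and $j \mid \tfrac{n}{s}$ are the same as the pairs $(m,s)$ with $m \mid n$ and $s \mid m$, under the correspondence $m = sj$ (equivalently $j = m/s$): indeed ``$s \mid n$ and $j \mid n/s$'' says exactly that $sj \mid n$. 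Re-indexing accordingly and grouping all terms with a fixed value of $m = sj$ gives
$$
T_n = \frac{1}{n^2} \sum_{m \mid n} \varphi(m) \, m^{\frac{n}{m}} \Big( \frac{n}{m} \Big)! \left( \sum_{s \mid m} s \, \mu\Big( \frac{m}{s} \Big) \right).
$$

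Next I would invoke \eqref{muu}, by which the inner sum $\sum_{s \mid m} s \, \mu(m/s)$ equals $\varphi(m)$. Plugging this in yields
$$
T_n = \frac{1}{n^2} \sum_{m \mid n} (\varphi(m))^2 \, m^{\frac{n}{m}} \Big( \frac{n}{m} \Big)!,
$$
which is \eqref{tq} after renaming $m$ as $j$. The only point that requires any care — and the one I would state explicitly — is the validity of the re-indexing $m = sj$, i.e. that the two descriptions of the index set of pairs coincide; once that is granted, the rest is a one-line application of \eqref{muu}. As sanity checks one can note that specializing to prime $n$ (where the only divisors are $1$ and $n$) recovers \eqref{hayoola}, and that $n=5$ gives $T_5 = \tfrac{1}{25}\big(1\cdot 5^5 \cdot 1! + 16 \cdot 5 \cdot 4!\big)/\!\ldots$ reduces to the value $T_5 = 8$ found in \exref{cyc5}.
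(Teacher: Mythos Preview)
Your proof is correct and follows essentially the same route as the paper: substitute \eqref{ninu} into \eqref{soo}, reorganize the double divisor sum via the substitution $m=sj$ (what the paper states as the ``sum interchange formula''), and then apply \eqref{muu} to collapse the inner sum $\sum_{s\mid m} s\,\mu(m/s)$ to $\varphi(m)$. The only blemish is the parenthetical sanity check for $n=5$, where the displayed terms are garbled (for $j=1$ one gets $1\cdot 1^5\cdot 5!=120$ and for $j=5$ one gets $16\cdot 5^1\cdot 1!=80$, summing to $200$, hence $T_5=200/25=8$); this does not affect the argument itself.
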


Observe that for prime $n$ the formula reduces to  
$$
T_n = \frac{1}{n^2} \big( (\varphi(1))^2 \, n! + (\varphi(n))^2 \, n \big) = \frac{1}{n} ( (n-1)! + (n-1)^2 )
$$
which agrees with our derivation in \eqref{hayoola}. Table 
\ref{teeq} shows the values of $T_n$ for $2 \leq n \leq 15$.

\begin{proof}
By \eqref{soo} and \eqref{ninu}, 
$$
T_n = \frac{1}{n} \sum_{s|n} s N_n^s = \frac{1}{n^2} \sum_{s|n} \sum_{j|\frac{n}{s}} s \mu(j) \, \varphi(sj) \, (sj)^{\frac{n}{sj}} \Big( \frac{n}{sj} \Big)!
$$
The sum interchange formula
$$
\sum_{s|n} \sum_{j|\frac{n}{s}} f(j,s) = \sum_{j|n} \sum_{s|j} f\Big( \frac{j}{s},s \Big) 
$$
then gives 
\begin{align*}
T_n & = \frac{1}{n^2} \sum_{j|n} \sum_{s|j} s \mu\Big( \frac{j}{s} \Big) \varphi(j) \, j^{\frac{n}{j}} \Big( \frac{n}{j} \Big)!  \\
& = \frac{1}{n^2} \sum_{j|n} \Big( \sum_{s|j} s \mu\Big( \frac{j}{s} \Big) \Big) \, \varphi(j) \, j^{\frac{n}{j}} \Big( \frac{n}{j} \Big)! \\
& = \frac{1}{n^2} \sum_{j|n} (\varphi(j))^2 \, j^{\frac{n}{j}} \Big( \frac{n}{j} \Big)! \qquad (\text{by} \ \eqref{muu}). \qedhere 
\end{align*}
\end{proof} 

{\tiny
\begin{table} 
\begin{tabular}{r|cl}
\toprule
$n$ & & $T_n$ \\
\midrule 
$2$ & & $1$ \\  
$3$ & & $2$ \\  
$4$ & & $3$ \\  
$5$ & & $8$ \\  
$6$ & & $24$ \\  
$7$ & & $108$ \\  
$8$ & & $640$ \\  
$9$ & & $4492$ \\  
$10$ & & $36336$ \\  
$11$ & & $329900$ \\  
$12$ & & $3326788$ \\  
$13$ & & $36846288$ \\  
$14$ & & $444790512$ \\  
$15$ & & $5811886656$ \\  
\bottomrule 
\end{tabular}
\vspace*{4mm}
\caption{\sl The values of $T_n$ for $2 \leq n \leq 15$.}
\label{teeq}
\end{table}
}

It is evident from Table \ref{teeq} that the sequence $\{ T_n \}$ grows rapidly as $n \to \infty$. More quantitatively, we have the following 

\begin{theorem}\label{asymp}
$T_n \sim (n-2)!$ as $n \to \infty$.
\end{theorem}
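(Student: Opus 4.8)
The plan is to extract from the sum in \eqref{tq} the single term corresponding to $j=1$, which will already carry the whole asymptotics, and to bound the rest. That term equals
$$
\frac{1}{n^2}\,(\varphi(1))^2\, 1^{\,n}\, n! \;=\; \frac{n!}{n^2} \;=\; \frac{(n-1)!}{n} \;=\; \frac{n-1}{n}\,(n-2)!,
$$
so its ratio to $(n-2)!$ already tends to $1$. It therefore suffices to show that the sum $R_n$ of the remaining summands, taken over the divisors $j\mid n$ with $j>1$, is $o\big((n-2)!\big)$.

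For such a divisor put $m:=n/j$, so that $1\le m\le n/2$ and $j=n/m$. Using $\varphi(j)\le j$ and the trivial bound $m!\le m^{m}$, the summand of $n^{2}R_n$ attached to $j$ is at most
$$
j^{2}\, j^{m}\, m^{m} \;=\; \Big(\frac{n}{m}\Big)^{m+2} m^{m} \;=\; \frac{n^{m+2}}{m^{2}} \;\le\; n^{m+2} \;\le\; n^{\,2+n/2}.
$$
Since $n$ has at most $n$ divisors, summing and dividing by $n^{2}$ gives
$$
0 \;\le\; T_n-\frac{n!}{n^2} \;=\; R_n \;\le\; n^{\,1+n/2}.
$$
By Stirling's formula, $\log\big((n-2)!\big)=n\log n\,(1+o(1))$ whereas $\log\big(n^{\,1+n/2}\big)=\tfrac12 n\log n\,(1+o(1))$, so $n^{\,1+n/2}/(n-2)!\to 0$ and hence $R_n=o\big((n-2)!\big)$. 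Combining this with $\big(n!/n^{2}\big)/(n-2)!=(n-1)/n\to1$ yields $T_n\sim(n-2)!$.

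The argument has no serious obstacle; the one point worth flagging is that in the estimate above one must keep the base $j$ inside the power $j^{\,n/j}$ rather than crudely replacing it by $n$. With the base kept, the factor $m^{m}$ coming from $m!\le m^{m}$ cancels all but a bounded power in the denominator of $(n/m)^{m+2}$, leaving each term at most $n^{\,2+n/2}$; replacing $j$ by $n$ would instead bound each term only by $n^{m}m!$, which for $m=n/2$ is of order $n^{\,n+o(n)}$ and in fact exceeds $(n-2)!$, so that crude step is fatal. (If one wants the sharp rate of decay of the error term, Stirling's formula may be used in place of $m!\le m^{m}$, but this is not needed for the stated asymptotic.)
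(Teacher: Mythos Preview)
Your proof is correct and follows essentially the same approach as the paper: isolate the $j=1$ term in \eqref{tq}, which already gives the asymptotic $(n-2)!$, and show the remaining terms are negligible. The only cosmetic differences are that the paper separates the $j=n$ term as well and keeps the factorial $(n/j)!$ in its estimate (applying Stirling at the end to get the sharper error rate $(e/4)^{n/2}$), whereas you absorb $j=n$ into the remainder and use the cruder $m!\le m^m$, which is entirely sufficient for the stated asymptotic.
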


\begin{proof}
This is easy to verify. By \eqref{tq},
$$
\frac{n^2 \, T_n}{n!} = 1 + \frac{(\varphi(n))^2}{(n-1)!} + \frac{1}{n!} \sum_j  (\varphi(j))^2 \, j^\frac{n}{j} \Big( \frac{n}{j} \Big)!
$$
where the sum is taken over all divisors $j$ of $n$ with $1<j<n$. We need
only check that the term on the far right tends to $0$ as $n \to \infty$. If $j|n$ and $1<j<n$, then $j \leq \lfloor n/2 \rfloor$ and $n/j \leq \lfloor n/2 \rfloor$. Hence,  
\begin{equation}\label{nina}
(\varphi(j))^2 \, j^{\frac{n}{j}} \Big( \frac{n}{j} \Big)! \leq j^{\frac{n}{j}+2}  \Big( \frac{n}{j} \Big)! \leq \left \lfloor \frac{n}{2} \right \rfloor^{\lfloor n/2 \rfloor +2} \left \lfloor \frac{n}{2} \right \rfloor! 
\end{equation}
The Stirling formula $k! \sim \sqrt{2\pi k} \ k^k \, e^{-k}$ for large $k$ gives the estimate 
$$
\frac{k^k \ k!}{(2k)!} \leq \con  \Big( \frac{e}{4} \Big)^k.
$$
Applying this to \eqref{nina} for $k=\lfloor n/2 \rfloor$, we obtain  
$$
\frac{1}{n!} (\varphi(j))^2 \, j^\frac{n}{j} \Big( \frac{n}{j} \Big)! \leq \con n^2 \Big( \frac{e}{4} \Big)^{\frac{n}{2}}.
$$
This yields the upper bound
$$
\frac{1}{n!} \sum_j (\varphi(j))^2 \, j^\frac{n}{j} \Big( \frac{n}{j} \Big)! \leq \con n^3 \Big( \frac{e}{4} \Big)^{\frac{n}{2}}, 
$$
which tends to $0$ as $n \to \infty$.
\end{proof}

\section{The distribution of degree}\label{sec:des}

We now turn to the problem of enumerating $n$-cycles with a given degree, using the dynamics of a family of linear endomorphisms of the circle. \vs

\noindent
{\it Convention.} We extend the definition of $N_{n,d}$ to all $d \geq 1$ by setting $N_{n,d}=0$ if $d \geq n-1$. 

\subsection{The circle endomorphisms $\mk$}\label{keek}         

For each integer $k \geq 2$, consider the multiplication-by-$k$ map of the circle $\RR/\ZZ$ defined by 
$$
\mk (x):= k x \modd. 
$$ 
Let $\OO = \{ x_1, x_2, \ldots, x_n \}$ be a period $n$ orbit of $\mk$, where the representatives are labeled so that $0 < x_1 < x_2 < \cdots < x_n<1$. We say that $\OO$ {\bit realizes} the cycle $\nu \in \sC_n$ if 
$$
\mk(x_i)=x_{\nu(i)} \qquad \text{for all} \ 1 \leq i \leq n. 
$$  
The orbit $\OO$ is said to realize the combinatorial type $[\nu]$ in $\sC_n$ if it realizes the conjugate cycle $\rho^j \nu \rho^{-j}$ for some $j$. \vs

It follows from the topological interpretation of degree in \S \ref{DNUM} that if an orbit of $\mk$ realizes $\nu \in \sC_{n,d}$, then necessarily $k \geq d$. Conversely, if $\nu \in \sC_{n,d}$ and $k \geq \max\{ d, 2 \}$, there are always periodic orbits of $\mk$ that realize the combinatorial type $[\nu]$. These orbits are essentially determined by how they are deployed on the circle relative to the $k-1$ fixed points $0,1/(k-1),\ldots,(k-2)/(k-1)$ of $\mk$, a useful description that makes it possible to enumerate them effectively. Below is a brief outline of how this is accomplished in \cite{PZ}. \vs

Consider $\{ 1, 2, \ldots, n \}$ with its natural cyclic order $\prec$ on three or more points. We define the {\bit signature} of $\nu \in \sC_{n,d}^s$ as the binary vector $\sig(\nu)=(a_1, \ldots, a_n)$, where  
$$
a_i := \begin{cases} 1 & \quad \text{if} \ \nu(i) \prec i \prec i+1 \prec \nu(i+1)  \\
0 & \quad \text{otherwise.} \end{cases}
$$
The degree $d=\deg(\nu)$ and symmetry order $s=\sym(\nu)$ are both encoded in the signature: There are precisely $d-1$ entries of $1$ in $\sig(\nu)$. Moreover, $\sig(\nu)$ is $r$-periodic, where $r=n/s$: 
\begin{equation}\label{rper}
a_{i+r}=a_i \qquad \text{for all} \ i. 
\end{equation}

To any periodic orbit $\OO=\{ x_1, x_2, \ldots, x_n \}$ of $\mk$ that realizes $\nu$, we assign the cumulative {\bit deployment vector} $\dep(\OO)=(w_1, \ldots, w_{k-1}) \in \ZZ^{k-1}$ whose components are defined by $w_i := \# (\OO \cap (0,i/(k-1)))$. One can check that   
\begin{enumerate}
\item[(i)]
$0 \leq w_1 \leq \cdots \leq w_{k-1}=n$, \vs
\item[(ii)]
$a_i=1$ implies $i \in \{ w_1, \ldots, w_{k-1} \}$. 
\end{enumerate} 
Any integer vector $w=(w_1,\ldots,w_{k-1})$ which satisfies these conditions is called {\bit $\nu$-admissible}.

\begin{theorem}\label{yagh}
Let $\nu \in \sC_{n,d}$ and $k \geq \max\{ d, 2 \}$. For every $\nu$-admissible vector $w \in \ZZ^{k-1}$ there exists a unique periodic orbit $\OO$ of $\mk$ with $\dep(\OO)=w$ that realizes $\nu$.  
\end{theorem}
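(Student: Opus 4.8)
The plan is to encode periodic orbits of $\mk$ by their itineraries relative to the $k$ preimages of $0$, translate the realization and deployment conditions into combinatorics of monotone integer sequences, and then match these sequences with the $\nu$-admissible vectors via an explicit reconstruction together with a cardinality count. The points $q_\ell := \ell/k$ ($0 \le \ell \le k-1$) cut $\RR/\ZZ$ into arcs $I_\ell := (q_\ell, q_{\ell+1})$, on each of which $\mk$ is an increasing homeomorphism onto $(\RR/\ZZ) \setminus \{0\}$. If $\OO = \{x_1 < \cdots < x_n\}$ realizes $\nu$, then no $x_j$ is $0$ (a fixed point, and $n \ge 2$) or any $q_\ell$ (else $\mk(x_j) = 0 \notin \OO$), so each $x_j$ has a well-defined address $\epsilon_j := \lfloor kx_j \rfloor \in \{0,\ldots,k-1\}$; iterating $kx_j = \epsilon_j + x_{\nu(j)}$ gives the base-$k$ expansion $x_j = \sum_{m \ge 0} \epsilon_{\nu^m(j)}\, k^{-m-1}$, so $\OO$ is determined by $\epsilon = (\epsilon_1,\ldots,\epsilon_n)$, and conversely any vector in $\{0,\ldots,k-1\}^n$ produces points satisfying $\mk(x_j) = x_{\nu(j)}$. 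Reading $x_j = (\epsilon_j + x_{\nu(j)})/k$ one finds $x_j < x_{j+1}$ iff $\epsilon_j < \epsilon_{j+1}$, or $\epsilon_j = \epsilon_{j+1}$ and $\nu(j) < \nu(j+1)$; hence the orbits realizing $\nu$ are in bijection with the sequences $\epsilon$ that are weakly increasing and strictly increasing at every descent $j \in \{1,\ldots,n-1\}$ of $\nu$, which I will call $\nu$-monotone.

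The heart of the matter is the deployment. The fixed points of $\mk$ are $p_\ell := \ell/(k-1)$, and since $q_\ell < p_\ell < q_{\ell+1}$ for $1 \le \ell \le k-2$ one has $p_\ell \in I_\ell$, with $\mk$ carrying $(q_\ell, p_\ell)$ onto $(0, p_\ell)$ and $(p_\ell, q_{\ell+1})$ onto $(p_\ell, 1)$ (while $p_0 = q_0 = 0$ and $p_{k-1} = 1$). For a $\nu$-monotone $\epsilon$ with orbit $\OO$, set $J_\ell := (p_{\ell-1}, p_\ell)$ and let $\delta_j$ be the index with $x_j \in J_{\delta_j}$, equivalently $w_{\delta_j-1} < j \le w_{\delta_j}$ with $w_0 := 0$. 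Because $\mk$ restricted to $J_\ell$ is increasing apart from a single downward jump at $q_\ell$, the order criterion forces
\[
\epsilon_j = \begin{cases} \delta_j - 1 & \text{if } \delta_{\nu(j)} > \delta_j,\ \text{or } \delta_{\nu(j)} = \delta_j \text{ and } j < \nu(j), \\ \delta_j & \text{if } \delta_{\nu(j)} < \delta_j,\ \text{or } \delta_{\nu(j)} = \delta_j \text{ and } j > \nu(j). \end{cases}
\]
So $\dep(\OO) = w$ recovers $\delta$, hence $\epsilon$, hence $\OO$: the deployment map is injective on the orbits realizing $\nu$. It also lands among the $\nu$-admissible vectors: condition (i) is immediate, and for (ii) the pattern $\nu(i) \prec i \prec i+1 \prec \nu(i+1)$ encoded by $a_i = 1$ says exactly that the image arc $\mk((x_i, x_{i+1}))$, swept positively from $x_{\nu(i)}$ to $x_{\nu(i+1)}$, contains $[x_i, x_{i+1}]$; since $\mk$ is expanding, this forces a fixed point $p_\ell$ into the gap $(x_i, x_{i+1})$, i.e. $w_\ell = i$ (automatic when $i = n$, as $w_{k-1} = n$). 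I expect this step --- deriving the address-from-deployment formula and pinning down condition (ii) --- to be the main obstacle, as it rests on a careful account of how the $x_j$'s, the fixed points $p_\ell$, and the preimages $q_\ell$ interleave on the circle.

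It remains to see that the deployment map is onto the $\nu$-admissible vectors, and for this I would compare cardinalities. Subtracting the forced jumps shows the number of $\nu$-monotone sequences equals the number of weakly increasing length-$n$ sequences with values in $\{0,\ldots,k-1-D\}$, namely $\binom{n+k-1-D}{n}$, where $D = \des(\nu)$. On the other side $\sig(\nu)$ has exactly $d-1$ ones, and a short check of cyclic orders shows $a_n = 1$ precisely when $\nu(n) > \nu(1)$ --- in which case $D = d-1$ --- while otherwise $a_n = 0$ and $D = d$. Counting the weakly increasing vectors $0 \le w_1 \le \cdots \le w_{k-1} = n$ whose entries include the $d-1$ ones of $\sig(\nu)$ (fix $w_{k-1} = n$, delete one copy of each still-uncovered mandatory entry, and count the remaining multiset over $\{0,\ldots,n\}$) yields $\binom{n+k-d}{n}$ in the first case and $\binom{n+k-d-1}{n}$ in the second, matching the counts of $\nu$-monotone sequences. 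An injection between two finite sets of equal size is a bijection, so for every $\nu$-admissible $w$ there is a unique $\nu$-monotone $\epsilon$, hence a unique orbit of $\mk$ realizing $\nu$, with $\dep = w$. The coding and the count are routine; the deployment analysis is the substantial part.
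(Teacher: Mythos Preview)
Your plan is correct and takes a genuinely different route from the paper. The paper does not prove \thmref{yagh} in the text but cites \cite[Theorem~7.9]{PZ}, noting that the idea there is to ``translate the realization problem to the problem of finding the steady-state of an associated regular Markov chain'': given a $\nu$-admissible $w$, one sets up a stochastic matrix whose unique stationary distribution produces the orbit directly, yielding existence and uniqueness in one constructive stroke. Your approach instead encodes orbits by their base-$k$ itineraries, identifies the orbits realizing $\nu$ with the $\nu$-monotone address sequences, shows that the deployment map from such orbits to $\nu$-admissible vectors is injective (via the explicit formula recovering $\epsilon_j$ from $\delta_j$ and $\delta_{\nu(j)}$), and then closes the argument by matching cardinalities on both sides. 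This is more elementary---no Perron--Frobenius or ergodic input is needed---and it has the pleasant feature that the counts $\binom{n+k-d}{n}$ and $\binom{n+k-d-1}{n}$ emerge twice, once for itineraries and once for admissible vectors, with the case split governed by whether $a_n=1$ (equivalently $\nu(n)>\nu(1)$). The Markov-chain approach, on the other hand, is more direct and actually constructs the orbit for a given $w$ without any appeal to counting. One small point to tighten when you write it out: the equivalence ``$x_j<x_{j+1}$ iff $\epsilon_j<\epsilon_{j+1}$, or $\epsilon_j=\epsilon_{j+1}$ and $\nu(j)<\nu(j+1)$'' is circular as stated (it presupposes the ordering you want to prove); the clean fix is to note that the map $(y_j)\mapsto((\epsilon_j+y_{\nu(j)})/k)$ is a $1/k$-contraction that preserves the closed simplex $\{y_1\le\cdots\le y_n\}$ whenever $\epsilon$ is $\nu$-monotone, so its unique fixed point lies there, and strictness then follows since $\des(\nu)\ge 1$ forces $\epsilon$ non-constant.
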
 

See \cite[Theorem 7.9]{PZ}. The idea of the proof is to translate the realization problem to the problem of finding the steady-state of an associated regular Markov chain. The theorem shows that in order to enumerate the orbits of $\mk$ that realize $\nu$ we can simply look at how many $\nu$-admissible vectors in $\ZZ^{k-1}$ there are. A routine count shows that this number is 
$\binom{n+k-d}{n}$ if $a_n=1$ and $\binom{n+k-d-1}{n}$ if $a_n=0$. 

\begin{example}
The $6$-cycle $\nu=(1 \, 3 \, 2 \, 4 \, 6 \, 5)$ has degree $d=3$, symmetry order $s=2$, and signature $\sig(\nu)=(0,0,1,0,0,1)$. For the choice $k=4$, the $\nu$-admissible vectors $(w_1, w_2, w_3) \in \ZZ^3$ are those that satisfy $0 \leq w_1 \leq w_2 \leq w_3=6$ and $\{ 3,6 \} \subset \{ w_1, w_2, w_3 \}$. There are $\binom{6+4-3}{6}=7$ such vectors:
$$
(0,3,6), (1,3,6), (2,3,6), (3,3,6), (3,4,6), (3,5,6), (3,6,6).
$$
Table \ref{tabnew} shows the corresponding $7$ orbits of $\mc$, guaranteed by \thmref{yagh}.
\end{example}  

\begin{table}
\begin{tabular}{c|cl}
\toprule
$\dep(\OO)$ & & $\OO$ \\
\midrule
$(0,3,6)$ & & $\big\{ \frac{183}{455}, \frac{198}{455},  \frac{277}{455}, \frac{337}{455}, \frac{387}{455}, \frac{438}{455} \big\}$ \\[5pt]
$(1,3,6)$ & & $\big\{ \frac{89}{585}, \frac{254}{585}, \frac{356}{585}, \frac{431}{585}, \frac{461}{585}, \frac{554}{585} \big\}$ \\[5pt]
$(2,3,6)$ & & $\big\{ \frac{43}{315}, \frac{58}{315}, \frac{172}{315}, \frac{232}{315}, \frac{247}{315}, \frac{298}{315} \big\}$ \\[5pt]
$(3,3,6)$ & & $\big\{ \frac{101}{1365}, \frac{251}{1365}, \frac{404}{1365}, \frac{1004}{1365}, \frac{1049}{1365}, \frac{1286}{1365} \big\}$ \\[5pt]
$(3,4,6)$ & & $\big\{ \frac{41}{585}, \frac{71}{585}, \frac{164}{585}, \frac{284}{585}, \frac{449}{585}, \frac{551}{585} \big\}$ \\[5pt]
$(3,5,6)$ & & $\big\{ \frac{22}{315}, \frac{37}{315}, \frac{88}{315}, \frac{148}{315}, \frac{163}{315}, \frac{277}{315} \big\}$ \\[5pt]
$(3,6,6)$ & & $\big\{ \frac{94}{1365}, \frac{139}{1365}, \frac{376}{1365}, \frac{556}{1365}, \frac{706}{1365}, \frac{859}{1365} \big\}$ \\[5pt]
\bottomrule
\end{tabular}
\vspace*{2mm}
\caption{\sl Realizations of the $6$-cycle $(1 \, 3 \, 2 \, 4 \, 6 \, 5)$ under the map $\mc$, parametrized by their deployment vectors.}
\label{tabnew}
\end{table}

Now take any $\nu \in \sC_{n,d}^s$ and let $\sig(\nu)=(a_1,\ldots,a_n)$, $r=n/s$. The combinatorial type $[\nu]$ consists of the conjugates $\rho^{-j} \nu \rho^j$ for $1 \leq j \leq r$, and   
$$
\sig(\rho^{-j} \nu \rho^j) = (a_{j+1},\ldots,a_n,a_1,\ldots,a_j).
$$
By the preceding remarks, the number of orbits of $\mk$ that realize $\rho^{-j} \nu \rho^j$ is ${n+k-d \choose n}$ if $a_j=1$ and is ${n+k-d-1 \choose n}$ if $a_j=0$. By $r$-periodicity \eqref{rper}, the list $a_1, \ldots, a_r$ contains $(d-1)/s$ entries of $1$ and $(n-d+1)/s$ entries of $0$. Thus, the number of distinct orbits of $\mk$ that realize $[\nu]$ is
$$
\frac{d-1}{s} {n+k-d \choose n} + \frac{n-d+1}{s} {n+k-d-1 \choose n} = \frac{k-1}{s} {n+k-d-1 \choose n-1}.
$$
This proves the following result (compare \cite[Theorem 7.5]{PZ}):

\begin{theorem}\label{real}
If $\nu \in \sC_{n,d}^s$ and $k \geq \max \{ d,2 \}$, there are precisely 
$$
\frac{k-1}{s} \binom{n+k-d-1}{n-1}  
$$
period $n$ orbits of $\mk$ that realize the combinatorial type $[\nu]$. 
\end{theorem}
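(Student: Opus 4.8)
The plan is to reduce to the per-cycle count supplied by \thmref{yagh} and then sum over the $r := n/s$ members of the combinatorial type $[\nu]$. This last summation is legitimate because a period $n$ orbit $\OO$ of $\mk$ induces, through its points listed in increasing cyclic order, exactly one $n$-cycle; hence an orbit realizes at most one cycle, and the sets of orbits realizing the various conjugates in $[\nu]$ are pairwise disjoint.

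The first step is to count the orbits of $\mk$ realizing a single $\mu \in \sC_{n,d}$. By \thmref{yagh} these correspond bijectively to the $\mu$-admissible vectors $w = (w_1,\ldots,w_{k-1}) \in \ZZ^{k-1}$, i.e.\ the weakly increasing vectors with $0 \le w_1 \le \cdots \le w_{k-1} = n$ whose coordinate set contains the (exactly $d-1$) indices $i$ with $a_i(\mu) = 1$. I would evaluate this by a stars-and-bars argument: delete one copy of each forced value and count the unconstrained weakly increasing fillings of what remains, remembering that the forced index $i = n$, when it occurs (that is, when $a_n(\mu) = 1$), is free since $w_{k-1} = n$ is already forced. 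This yields $\binom{n+k-d}{n}$ realizing orbits when $a_n(\mu) = 1$ and $\binom{n+k-d-1}{n}$ when $a_n(\mu) = 0$.

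Next I would track signatures along the type. The $r$ cycles in $[\nu]$ are $\rho^{-j}\nu\rho^j$ for $j = 1,\ldots,r$, all of degree $d$ and symmetry order $s$ by \thmref{degp}, and straight from the definition of $\sig$ one gets $\sig(\rho^{-j}\nu\rho^j) = (a_{j+1},\ldots,a_n,a_1,\ldots,a_j)$, the $j$-th cyclic shift of $\sig(\nu) = (a_1,\ldots,a_n)$; its last coordinate is therefore $a_j$. Combining with the previous step, $\rho^{-j}\nu\rho^j$ is realized by $\binom{n+k-d}{n}$ orbits if $a_j = 1$ and by $\binom{n+k-d-1}{n}$ orbits if $a_j = 0$. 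Finally, the $r$-periodicity \eqref{rper} of $\sig(\nu)$ together with $\sig(\nu)$ having $d-1$ ones in all and $s \mid d-1$ (by \thmref{degp}) forces exactly $(d-1)/s$ ones and $(n-d+1)/s$ zeros among $a_1,\ldots,a_r$.

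Summing over $j = 1, \ldots, r$ gives $\frac{d-1}{s}\binom{n+k-d}{n} + \frac{n-d+1}{s}\binom{n+k-d-1}{n}$, and the remaining task is to recognize this as $\frac{k-1}{s}\binom{n+k-d-1}{n-1}$. Applying Pascal's rule $\binom{n+k-d}{n} = \binom{n+k-d-1}{n-1} + \binom{n+k-d-1}{n}$ collapses the identity to $n\binom{n+k-d-1}{n} = (k-d)\binom{n+k-d-1}{n-1}$, which is the absorption identity $\binom{N}{n} = \frac{N-n+1}{n}\binom{N}{n-1}$ with $N = n+k-d-1$. Beyond this one-line binomial manipulation, the only point I expect to need care is the bookkeeping: getting the admissible-vector count and its $a_n$-dependence exactly right, and checking the cyclic-shift formula for the signature with indices read modulo $n$. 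Once \thmref{yagh} is in hand there is no new idea, only this accounting.
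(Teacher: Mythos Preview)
Your proposal is correct and follows essentially the same approach as the paper: reduce to the per-cycle admissible-vector count from \thmref{yagh}, observe that the last signature entry of $\rho^{-j}\nu\rho^j$ is $a_j$, use $r$-periodicity to tally the ones and zeros among $a_1,\ldots,a_r$, and sum. You supply slightly more detail than the paper does on the disjointness of realizing orbits and on the final binomial simplification, but the argument is the same.
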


The following corollary is immediate: 

\begin{corollary}\label{nib}
For every $k \geq 2$ and $d \geq 1$, the number of period $n$ orbits of $\mk$ that realize some element of $\sC_{n,d}$ is
$$
\frac{k-1}{n} \binom{n+k-d-1}{n-1} N_{n,d}.
$$
\end{corollary}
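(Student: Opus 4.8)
The plan is to reduce the statement to \thmref{real} by decomposing $\sC_{n,d}$ into combinatorial types and summing. The first point to record is that a period $n$ orbit $\OO=\{x_1<\cdots<x_n\}$ of $\mk$ determines the cycle it realizes uniquely, since $\mk(x_i)=x_{\nu(i)}$ reads off $\nu$ from the (cyclically labeled) orbit. Hence each period $n$ orbit of $\mk$ realizes at most one element of $\sC_{n,d}$, and in particular at most one combinatorial type. Moreover, by \thmref{degp}(iv) degree is constant on each combinatorial type, so every type is either entirely contained in $\sC_{n,d}$ or disjoint from it, and these types partition $\sC_{n,d}$. Consequently the number of period $n$ orbits of $\mk$ that realize some element of $\sC_{n,d}$ is exactly the sum, over the $T_{n,d}$ distinct combinatorial types contained in $\sC_{n,d}$, of the number of orbits realizing each type.

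Assume first that $k\geq\max\{d,2\}$. List the combinatorial types inside $\sC_{n,d}$ as $[\nu_1],\ldots,[\nu_{T_{n,d}}]$ and put $s_i=\sym([\nu_i])$. By \thmref{real}, the number of period $n$ orbits of $\mk$ realizing $[\nu_i]$ is $\frac{k-1}{s_i}\binom{n+k-d-1}{n-1}$, so the total in question equals $(k-1)\binom{n+k-d-1}{n-1}\sum_{i}\frac1{s_i}$. Since $\#[\nu_i]=n/s_i$ and the $[\nu_i]$ partition $\sC_{n,d}$, we have $N_{n,d}=\sum_i \#[\nu_i]=n\sum_i \frac1{s_i}$, hence $\sum_i \frac1{s_i}=N_{n,d}/n$ and the total becomes $\frac{k-1}{n}\binom{n+k-d-1}{n-1}N_{n,d}$, as claimed. (Equivalently, one may group the $[\nu_i]$ by symmetry order and use $T_{n,d}^s=\frac{s}{n}N_{n,d}^s$ together with \eqref{loo}; this recovers the same identity $\sum_i 1/s_i=N_{n,d}/n$.)

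It remains to handle $2\leq k<d$, which is not covered by \thmref{real}. In this range no period $n$ orbit of $\mk$ can realize any $\nu\in\sC_{n,d}$: by the topological description of degree in \secref{DNUM}, a covering map of the circle realizing such a $\nu$ has degree at least $d>k$, whereas $\mk$ has degree $k$. So the left-hand side of the asserted identity is $0$. The right-hand side vanishes as well: if $d\geq n-1$ then $N_{n,d}=0$ by our convention, while if $d\leq n-2$ then $0\leq n+k-d-1<n-1$, so $\binom{n+k-d-1}{n-1}=0$. The only step requiring any care in the whole argument is precisely this boundary bookkeeping — verifying that the closed formula still evaluates to zero when $k<d$, which amounts to the elementary inequality $n+k-d-1\geq 0$ whenever $N_{n,d}\neq 0$. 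All the substantive content is supplied by \thmref{real}; the corollary itself is a one-line summation.
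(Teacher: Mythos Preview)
Your proof is correct and follows essentially the same approach as the paper: decompose $\sC_{n,d}$ into combinatorial types, apply \thmref{real} to each, and sum. The paper groups types by symmetry order and invokes $T_{n,d}^s=\tfrac{s}{n}N_{n,d}^s$ with \eqref{loo} (treating $d=1$ separately since \eqref{loo} only covers $2\le d\le n-2$), whereas you sum over individual types using $\#[\nu_i]=n/s_i$ directly---an equivalent rearrangement that you yourself note parenthetically, and which has the mild advantage of handling $d=1$ uniformly.
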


\begin{proof}
The claim is trivial if $d>k$ since in this case the number of such orbits and the binomial coefficient $\binom{n+k-d-1}{n-1}$ are both $0$. If $2 \leq d \leq k$, then by \thmref{real} for each divisor $s$ of \mbox{$\gcd(n,d-1)$} there are 
$$
\frac{k-1}{s} \binom{n+k-d-1}{n-1} T_{n,d}^s = \frac{k-1}{n} \binom{n+k-d-1}{n-1} N_{n,d}^s
$$
period $n$ orbits of $\mk$ that realize some element of $\sC_{n,d}^s$. The result then follows from \eqref{loo} by summing over all such $s$. Finally, since $\sC_{n,1}^n=\sC_{n,1}$, \thmref{real} shows that there are 
$$
\frac{k-1}{n} \binom{n+k-2}{n-1} T_{n,1} = \frac{k-1}{n} \binom{n+k-2}{n-1} N_{n,1}
$$
period $n$ orbits of $\mk$ that realize some element of $\sC_{n,1}$.  
\end{proof}

\subsection{The numbers $N_{n,d}$}\label{CNQD}

For $k \geq 2$ let $P_n(k)$ denote the number of periodic points of $\mk$ of period $n$. The periodic points of $\mk$ whose period is a divisor of $n$ are precisely the $k^n-1$ solutions of the equation $k^n x = x$ (mod $\ZZ$). Thus,
\begin{equation}\label{masti}
\sum_{r|n} P_r(k) = k^n-1
\end{equation}
and the M\"{o}bius inversion formula gives
\begin{equation}\label{perrr}
P_n(k) = \sum_{r|n} \mu\Big( \frac{n}{r} \Big) (k^r-1).
\end{equation}
Introduce the integer-valued quantity
$$
\Delta_n(k):= \begin{cases} 
\dfrac{P_n(k)}{k-1} & \quad \text{if} \ k \geq 2 \vs \\ 
\varphi(n) & \quad \text{if} \ k=1. \end{cases}
$$
When $k \geq 2$ we can interpret $\Delta_n(k)$ as the number of period $n$ points of $\mk$ up to the rotation of the form $x \mapsto x+j/(k-1) \, (\mo \ZZ)$. This is because $\mk$ and the rotation $x \mapsto x+1/(k-1) \, (\mo \ZZ)$ commute, so $x$ is has period $n$ under $\mk$ if and only if $x+1/(k-1)$ does. \vs  

By \eqref{perrr}, for every $k \geq 2$,
$$
\Delta_n(k) = \sum_{r|n} \mu \Big( \frac{n}{r} \Big) \, \frac{k^r-1}{k-1} = \sum_{r|n} \mu \Big( \frac{n}{r} \Big) \Big(\sum_{j=0}^{r-1} k^j \Big).
$$
If $k=1$, the sum on the far right reduces to $\sum_{r|n} r \mu (n/r)$ which is equal to $\varphi(n)$ by \eqref{muu}. It follows that 
\begin{equation}\label{masq}
\Delta_n(k) = \sum_{r|n} \mu \Big( \frac{n}{r} \Big) \Big(\sum_{j=0}^{r-1} k^j \Big) \qquad \text{for all} \ k \geq 1.
\end{equation}
Since $\mk$ has $P_n(k)/n$ period $n$ {\it orbits} altogether, \corref{nib} shows that for every $k \geq 2$,
$$
\frac{k-1}{n} \sum_{d=1}^{n-2} \binom{n+k-d-1}{n-1} N_{n,d} = \frac{P_n(k)}{n}
$$
or
\begin{equation}\label{mame1}
\sum_{d=1}^{n-2} \binom{n+k-d-1}{n-1} N_{n,d} = \Delta_n(k).
\end{equation}
This is in fact true for every $k \geq 1$ (the case $k=1$ reduces to $N_{n,1}=\Delta_n(1)=\varphi(n)$). 

\begin{remark}\label{ghod}
Since the summand in \eqref{mame1} is zero unless $1 \leq d \leq \min(n-2,k)$, we can replace the upper bound of the sum by $k$. 
\end{remark}

\begin{theorem}\label{jeeg}
For every $d \geq 1$,
\begin{equation}\label{tada}
N_{n,d} = \sum_{i=1}^d (-1)^{d-i} \binom{n}{d-i} \Delta_n(i).
\end{equation}
\end{theorem}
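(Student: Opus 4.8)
The plan is to repackage the whole family of identities \eqref{mame1} — valid for \emph{all} $k \ge 1$ — into a single generating-function identity in an auxiliary formal variable $x$, and then recover \eqref{tada} by reading off the coefficient of $x^d$. Write $G_n(x) := \sum_{d \ge 1} N_{n,d}\, x^d$, which is a polynomial of degree $\le n-2$ thanks to the convention $N_{n,d}=0$ for $d \ge n-1$.

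First I would multiply \eqref{mame1} by $x^k$ and sum over $k \ge 1$. Since the inner sum over $d$ is finite, the interchange of summations is harmless, giving
$$
\sum_{k \ge 1} \Delta_n(k)\, x^k \;=\; \sum_{d \ge 1} N_{n,d} \sum_{k \ge 1} \binom{n+k-d-1}{n-1} x^k .
$$
For each fixed $d \ge 1$ the binomial coefficient $\binom{n+k-d-1}{n-1}$ vanishes whenever $k < d$, so the inner $k$-sum effectively starts at $k = d$; substituting $k = d+\ell$ and invoking the negative binomial series $\sum_{\ell \ge 0} \binom{n-1+\ell}{n-1} x^\ell = (1-x)^{-n}$ yields $\sum_{k \ge 1} \binom{n+k-d-1}{n-1} x^k = x^d (1-x)^{-n}$. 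Therefore
$$
\sum_{k \ge 1} \Delta_n(k)\, x^k \;=\; \frac{G_n(x)}{(1-x)^n}, \qquad \text{equivalently} \qquad G_n(x) \;=\; (1-x)^n \sum_{i \ge 1} \Delta_n(i)\, x^i ,
$$
which is precisely the Carlitz-type identity announced in the introduction.

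It then remains to extract coefficients. Expanding $(1-x)^n = \sum_{j \ge 0} (-1)^j \binom{n}{j} x^j$ and multiplying by $\sum_{i \ge 1} \Delta_n(i)\, x^i$, the coefficient of $x^d$ on the right-hand side is $\sum_{i=1}^{d} (-1)^{d-i} \binom{n}{d-i} \Delta_n(i)$, while on the left-hand side it is $N_{n,d}$; comparing gives \eqref{tada}. (For $d \ge n-1$ both sides are $0$, consistent with the convention.) Conceptually, the matrix $\big(\binom{n+k-d-1}{n-1}\big)_{k,d}$ is unitriangular in $k,d$ — it equals $1$ when $k=d$ and $0$ when $k<d$ — so \eqref{mame1} is an invertible linear system over $\ZZ$, and the theorem merely exhibits its inverse; the power-series computation above is exactly the statement that $(1-x)^{-n}$ and $(1-x)^n$ are reciprocal. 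There is no substantial obstacle: the only points deserving a word of care are the legitimacy of the summation interchange (immediate, the $d$-sum being finite) and the replacement of the lower limit $k \ge 1$ by $k \ge d$ in the inner sum, both of which are routine.
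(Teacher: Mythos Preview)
Your proof is correct and takes a somewhat different route from the paper's. Both arguments invert the linear system \eqref{mame1}, but the paper does so by direct substitution: it plugs \eqref{mame1} into the right side of \eqref{tada} and is led to verify the binomial identity $\sum_{a=0}^b (-1)^a \binom{n+a-1}{b-1}\binom{b}{a}=0$ for $b>0$, which it proves by differentiating $x^{n-1}(x+1)^b$ and evaluating at $x=-1$. You instead encode \eqref{mame1} as multiplication of $G_n(x)$ by the formal power series $(1-x)^{-n}$, so that inversion is simply multiplication by $(1-x)^n$; this yields the Carlitz-type identity \eqref{genG} first, and \eqref{tada} follows by reading off coefficients. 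The paper's logical order is the reverse: \eqref{genG} is deduced \emph{from} \eqref{tada}. Your generating-function argument is arguably cleaner in that it sidesteps the ad~hoc binomial identity and the differentiation trick; the paper's computation, on the other hand, makes the unitriangular inversion completely explicit at the level of matrix entries.
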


In particular, the theorem claims vanishing of the sum if $d \geq n-1$. Table \ref{tab3} shows the values of $N_{n,d}$ for $2 \leq n \leq 12$. 

{\tiny
\begin{table}
\centering
\begin{tabular}{r|crrrrrrrrrr}
\toprule
\diagbox{$n$}{$d$} & & $1$ & $2$ & $3$ & $4$ & $5$ & $6$ & $7$ & $8$ & $9$ & $10$ \\ 
\midrule
$2$ & & $1$ & & & & & & & & & \\  
$3$ & & $2$ & & & & & & & & & \\  
$4$ & & $2$ & $4$ & & & & & & & & \\  
$5$ & & $4$ & $10$ & $10$ & & & & & & & \\  
$6$ & & $2$ & $42$ & $54$ & $22$ & & & & & &  \\  
$7$ & & $6$ & $84$ & $336$ & $252$ & $42$ & & & & & \\  
$8$ & & $4$ & $208$ & $1432$ & $2336$ & $980$ & $80$ & & & & \\  
$9$ & & $6$ & $450$ & $5508$ & $16548$ & $14238$ & $3402$ & $168$ & & & \\  
$10$ & & $4$ & $950$ & $19680$ & $99250$ & $153860$ & $77466$ & $11320$ & $350$ & & \\ 
$11$ & & $10$ & $1936$ & $66616$ & $534688$ & $1365100$ & $1233760$ & $389224$ & $36784$ & $682$ & \\  
$12$ & & $4$ & $3972$ & $217344$ & $2671560$ & $10568280$ & $15593376$ & $8893248$ & $1851096$ & $116580$ & $1340$ \\ 
\bottomrule  
\end{tabular}
\vspace*{4mm}
\caption{\sl The distributions $N_{n,d}$ for $2 \leq n \leq 12$.}
\label{tab3}
\end{table}
} 

\begin{proof}
This is a form of inversion for binomial coefficients. Use \eqref{mame1} to write 
\begin{align*}
& \hspace{5mm} \sum_{i=1}^d (-1)^{d-i} \binom{n}{d-i} \Delta_n(i) & \\
& = \sum_{i=1}^d \sum_{j=1}^{n-2} (-1)^{d-i} \binom{n}{d-i} \binom{n+i-j-1}{n-1} N_{n,j} & \\
& = \sum_{i=1}^d \sum_{j=1}^i (-1)^{d-i} \binom{n}{d-i} \binom{n+i-j-1}{n-1} N_{n,j} & (\text{by \remref{ghod}}) \\
& = \sum_{j=1}^d \left( \sum_{i=j}^d (-1)^{d-i} \binom{n}{d-i} \binom{n+i-j-1}{n-1} \right) N_{n,j}. & 
\end{align*} 
Thus, \eqref{tada} is proved once we check that 
\begin{equation}\label{saq1}
\sum_{i=j}^d (-1)^{d-i} \binom{n}{d-i} \binom{n+i-j-1}{n-1}  = 0 \qquad \text{for} \ j<d.
\end{equation}  
Introduce the new variables $a:=i-j$ and $b:=d-j>0$ so \eqref{saq1} takes the form   
\begin{equation}\label{saq2}
\sum_{a=0}^b (-1)^a \binom{n}{b-a} \binom{n+a-1}{n-1} = 0.
\end{equation}
The identity 
$$
\binom{n}{b-a} \binom{n+a-1}{n-1} = \frac{n}{b} \ \binom{n+a-1}{b-1} \binom{b}{a}
$$
shows that \eqref{saq2} is in turn equivalent to 
\begin{equation}\label{saq3}
\sum_{a=0}^b (-1)^a  \binom{n+a-1}{b-1} \binom{b}{a} = 0.
\end{equation}
To prove \eqref{saq3}, consider the binomial expansion
$$
P(x):=x^{n-1}(x+1)^b = \sum_{a=0}^b \binom{b}{a} x^{n+a-1}
$$
and differentiate it $b-1$ times with respect to $x$ to get
$$
P^{(b-1)}(x)= (b-1)! \sum_{a=0}^b \binom{n+a-1}{b-1} \binom{b}{a} x^{n+a-b}.
$$ 
Since $P$ has a zero of order $b$ at $x=-1$, we have $P^{(b-1)}(-1)=0$ and \eqref{saq3} follows.   
\end{proof}

As an application of \thmref{jeeg}, we record the following result which will be invoked in \S \ref{sec:stat}:  

\begin{theorem}
The generating function $G_n(x):= \sum_{d=1}^{n-2} N_{n,d} \, x^d$ has the expansion 
\begin{equation}\label{genG}
G_n(x)=(1-x)^n \sum_{i \geq 1} \Delta_n(i) \, x^i.
\end{equation}
\end{theorem}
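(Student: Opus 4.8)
The plan is to substitute the closed formula for $N_{n,d}$ from \thmref{jeeg} into the definition of $G_n$ and recognize the outcome as the Cauchy product of $(1-x)^n$ with $\sum_{i\geq 1}\Delta_n(i)\,x^i$.

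First I would record that, by the convention preceding \secref{keek}, $N_{n,d}=0$ for $d\geq n-1$, and that \thmref{jeeg} expresses $N_{n,d}=\sum_{i=1}^{d}(-1)^{d-i}\binom{n}{d-i}\Delta_n(i)$ for every $d\geq 1$ (the right-hand side vanishing automatically once $d\geq n-1$). Hence
$$
G_n(x)=\sum_{d\geq 1}N_{n,d}\,x^d=\sum_{d\geq 1}\sum_{i=1}^{d}(-1)^{d-i}\binom{n}{d-i}\Delta_n(i)\,x^d ,
$$
the outer sum being effectively finite. Next I would reindex by setting $e:=d-i$, so that $(-1)^{d-i}\binom{n}{d-i}x^d=(-1)^e\binom{n}{e}x^{i+e}$ while the constraints $d\geq 1$, $1\leq i\leq d$ become exactly $i\geq 1$, $e\geq 0$. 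Rearranging the resulting double sum then gives
$$
G_n(x)=\Big(\sum_{e\geq 0}(-1)^e\binom{n}{e}x^e\Big)\Big(\sum_{i\geq 1}\Delta_n(i)\,x^i\Big),
$$
and since $\binom{n}{e}=0$ for $e>n$, the first factor is the polynomial $(1-x)^n$ by the binomial theorem, which is precisely \eqref{genG}.

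The only point needing a little care — and the nearest thing to an obstacle — is the legitimacy of that rearrangement, since $\sum_{i\geq 1}\Delta_n(i)\,x^i$ does not terminate. I would dispose of this either by carrying out the whole computation in the formal power series ring $\QQ[[x]]$, where the displayed identity is literally the definition of the product, or, in analytic language, by noting that $\Delta_n(i)=O(i^{n-1})$, so that the series converges for $|x|<1$ and absolute convergence licenses every rearrangement there; the resulting identity between a polynomial and a function analytic on $|x|<1$ then holds identically. As a consistency check one may observe that extracting the coefficient of $x^d$ from \eqref{genG} reproduces \thmref{jeeg} verbatim, so the generating-function identity is in fact equivalent to that theorem.
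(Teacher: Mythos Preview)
Your proof is correct and is essentially the same as the paper's: both arguments identify the coefficient of $x^d$ in the Cauchy product $(1-x)^n\sum_{i\geq 1}\Delta_n(i)x^i$ with the alternating sum $\sum_{i=1}^d(-1)^{d-i}\binom{n}{d-i}\Delta_n(i)$ and invoke \thmref{jeeg}, the only difference being that the paper extracts coefficients from the right-hand side while you substitute into the left-hand side and rearrange. Your discussion of formal power series versus absolute convergence for $|x|<1$ matches the paper's remark following the statement.
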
 

This should be viewed as an equality between formal power series. It is a true equality for $|x|<1$ where the series on the right converges absolutely.\footnote{This is because $\Delta_n(i)$ grows like $i^{n-1}$ for fixed $n$ as $i \to \infty$; compare \lemref{boz}.}

\begin{proof}
For each $d \geq 1$ the coefficient of $x^d$ in the product 
$$
(1-x)^n \sum_{i \geq 1} \Delta_n(i) \, x^i = \sum_{j=0}^n (-1)^j \binom{n}{j} x^j \ \cdot \ \sum_{i \geq 1} \Delta_n(i) \, x^i 
$$
is $\sum_{i=1}^d (-1)^{d-i} \binom{n}{d-i} \, \Delta_n(i)$. This is $N_{n,d}$ by \eqref{tada}.
\end{proof}

\subsection{The numbers $T_{n,d}$}\label{ttnn}

Our counts for the numbers $N_n^s$ and $N_{n,d}$ lead to the system of linear equations \eqref{foo} and \eqref{loo} on the $N^s_{n,d}$, but such systems are typically under-determined. Thus, additional information is needed to find the  $N^s_{n,d}$ and therefore $T_{n,d}$. The following example serves to illustrates this point, where we use the dynamics of $\mk$ to obtain this additional information. 

\begin{example}\label{teegg}
For $n=8$ there are nine admissible pairs 
$$
(d,s)= (1,8), (2,1), (3,1), (3,2), (4,1), (5,1), (5,2), (5,4), (6,1)
$$
(see \exref{hasht}). We record the values of $N^s_{8,d}$ on a grid as shown in \figref{t8}. By \eqref{foo} and \eqref{loo}, the values along the $s$-th row add up to $N_8^s$ and those along the $d$-th column add up to $N_{8,d}$, both available from Tables \ref{tab1} and \ref{tab3}. This immediately gives five of the required nine values:
$$
N^8_{8,1}=4, \quad N^1_{8,2}=208, \quad N^1_{8,4}=2336, \quad N^4_{8,5}=4, \quad N^1_{8,6}=80.
$$
\begin{figure}[t]
\centering
\begin{overpic}[width=0.6\textwidth]{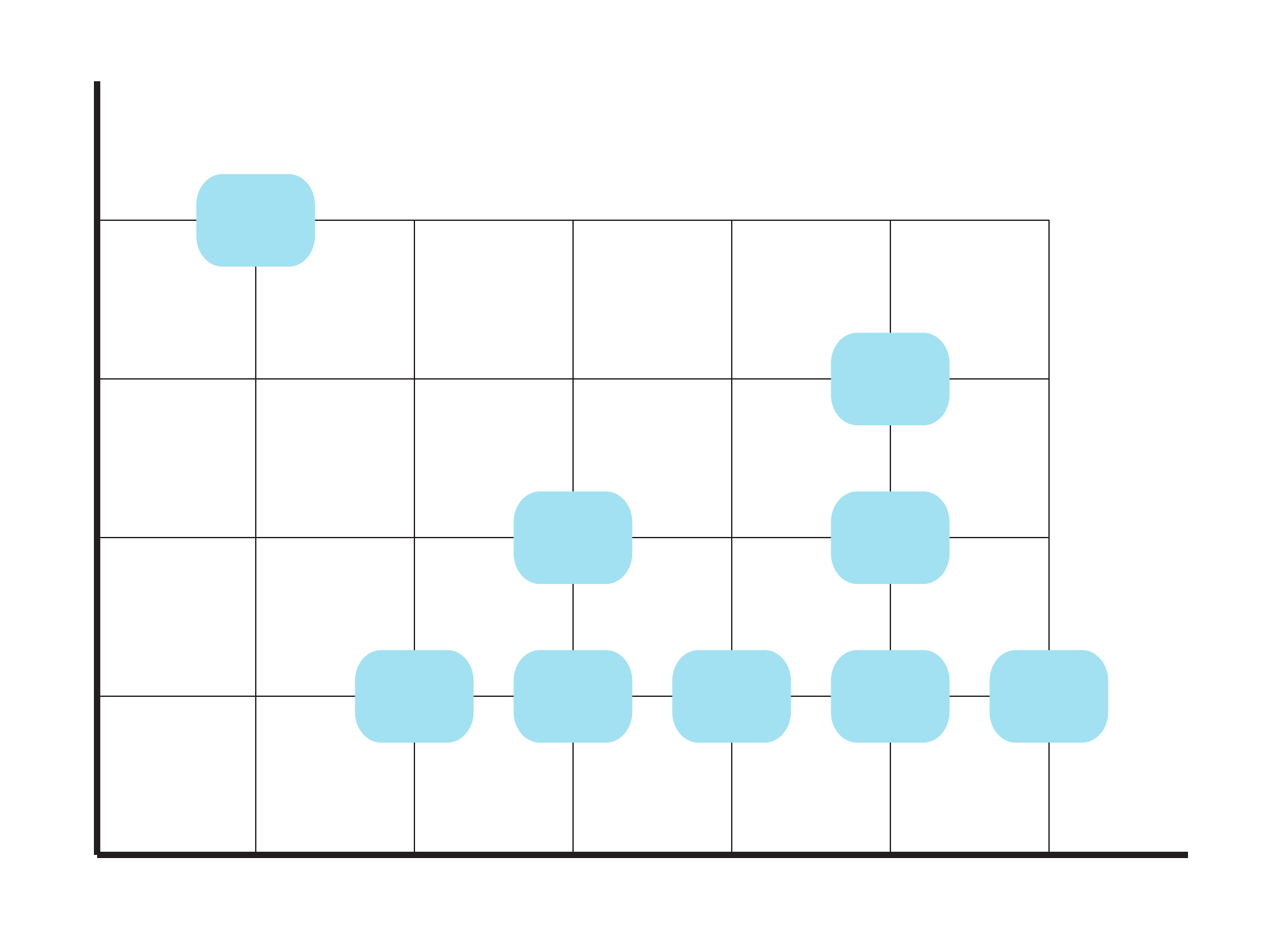}
\put (19,57) {\footnotesize $4$}
\put (29.7,19) {\footnotesize $208$}
\put (41.5,32) {\footnotesize $N^2_{8,3}$}
\put (41.5,19.3) {\footnotesize $N^1_{8,3}$}
\put (53.8,19) {\footnotesize $2336$}
\put (69.4,44.5) {\footnotesize $4$}
\put (66.6,32) {\footnotesize $N^2_{8,5}$}
\put (66.6,19.3) {\footnotesize $N^1_{8,5}$}
\put (81,19) {\footnotesize $80$}
\put (4,19) {\tiny $1$}
\put (4,32) {\tiny $2$}
\put (4,44) {\tiny $4$}
\put (4,57) {\tiny $8$}
\put (19.5,3) {\tiny $1$}
\put (32,3) {\tiny $2$}
\put (44.5,3) {\tiny $3$}
\put (57.2,3) {\tiny $4$}
\put (70,3) {\tiny $5$}
\put (82.5,3) {\tiny $6$}
\put (7,71) {$s$}
\put (96,6.5) {$d$}
\end{overpic}
\caption{\sl Computation of the joint distribution $N^s_{8,d}$ in \exref{teegg}.}  
\label{t8}
\end{figure}
Moreover, it leads to the system of linear equations 
\begin{equation}\label{char}
\begin{cases}
N^1_{8,3}+N^2_{8,3} & \! \! = 1432 \\
N^1_{8,5}+N^2_{8,5} & \! \! = 976 \\
N^1_{8,3}+N^1_{8,5} & \! \! = 2368 \\
N^2_{8,3}+N^2_{8,5} & \! \! = 40
\end{cases}
\end{equation}
on the remaining four unknowns which has rank $3$ and therefore does not determine the solution uniquely. An additional piece of information can be obtained by considering the period $8$ orbits of $\mb$ which realize cycles in $\sC^2_{8,3}$ (see \cite{PZ}, especially Theorem 6.6, for the results supporting the following claims). Every such orbit is {\bit self-antipodal} in the sense that it is invariant under the $180^{\circ}$ rotation $x \mapsto x+1/2$ of the circle $\RR/\ZZ$. It follows that $x$ belongs to such orbit if and only if it satisfies
$$
3^4 x = x+\frac{1}{2} \quad (\operatorname{mod} \ZZ).
$$     
This is equivalent to $x$ being rational of the form  
$$
x=\frac{2j-1}{160} \quad (\operatorname{mod} \ZZ) \quad \text{for some} \ 1 \leq j \leq 80. 
$$
Of the $10$ orbits of $\mb$ thus determined, $4$ realize rotation cycles in $\sC^8_{8,1}$ and the remaining $6$ realize cycles in $\sC^2_{8,3}$. Moreover, by \thmref{real} every combinatorial type in $\sC^2_{8,3}$ is realized by a {\it unique} orbit of $\mb$. It follows that $N^2_{8,3}=4T^2_{8,3}=24$. Now from \eqref{char} we obtain 
$$
N^1_{8,3} = 1408, \quad N^2_{8,3}= 24, \quad 
N^1_{8,5} = 960, \quad N^2_{8,5}= 16 
$$  
and therefore 
$$
T_{8,1}=4, \ \ T_{8,2}=26, \ \ T_{8,3}=182, \ \ T_{8,4}=292, \ \ T_{8,5}=126, \ \ T_{8,6}=10.
$$   
Observe that $T_8=\sum_{d=1}^6 T_{8,d}=640$, in agreement with the value in Table \ref{teeq} coming from formula \eqref{tq}.  
\end{example}

Table \ref{tab4} shows the result of similar but often more complicated dynamical arguments to determine $T_{n,d}$ for $n$ up to $12$. It would be desirable to develop a general method (and perhaps a closed formula) to compute these numbers for arbitrary $n$.  

{\tiny
\begin{table}
\centering
\begin{tabular}{r|crrrrrrrrrr}
\toprule
\diagbox{$n$}{$d$} & & $1$ & $2$ & $3$ & $4$ & $5$ & $6$ & $7$ & $8$ & $9$ & $10$\\ 
\midrule
$2$ & & $1$ & & & & & & & & & \\ 
$3$ & & $2$ & & & & & & & & & \\ 
$4$ & & $2$ & $1$ & & & & & & & & \\  
$5$ & & $4$ & $2$ & $2$ & & & & & & & \\  
$6$ & & $2$ & $7$ & $10$ & $5$ & & & & & & \\  
$7$ & & $6$ & $12$ & $48$ & $36$ & $6$ & & & & & \\  
$8$ & & $4$ & $26$ & $\color{red}182$ & $292$ & $\color{red}126$ & $10$ & & & & \\  
$9$ & & $6$ & $50$ & $612$ & $\color{red}1844$ & $1582$ & $378$ & $\color{red}20$ & & & \\  
$10$ & & $4$ & $95$ & $\color{red}1978$ & $9925$ & $\color{red}15408$ & $7753$ & $\color{red}1138$ & $35$ & & \\
$11$ & & $10$ & $176$ & $6056$ & $48608$ & $124100$ & $112160$ & $35384$ & $3344$ & $62$ & \\ 
$12$ & & $4$ & $331$ & $\color{red}18140$ & $\color{red}222654$ & $\color{red}880848$ & $1299448$ & $\color{red}741260$ & $154258$ & $\color{red}9732$ & $\color{red}113$ \\
\bottomrule 
\end{tabular}
\vspace*{4mm}
\caption{\sl The distributions $T_{n,d}$ for $2 \leq n \leq 12$. The entries in red cannot be obtained from the sole knowledge of the $N_n^s$ and $N_{n,d}$ in Tables \ref{tab1} and \ref{tab3}.}
\label{tab4}
\end{table}
}

\section{A statistical view of the degree}\label{sec:stat}

\subsection{Classical Eulerian numbers}\label{compp}

The numbers $N_{n,d}$ are the analogs of the {\bit Eulerian numbers} $A_{n,d}$ which tally the permutations of descent number $d$ in the full symmetric group $\sS_n$:\footnote{The numbers $A_{n,d}$ are denoted by $\knuth{n}{d}$ in \cite{GKP} and by $A(n,d+1)$ in \cite{St}.} 
$$
A_{n,d} := \# \big\{ \nu \in \sS_n: \des(\nu)=d \big\}. 
$$
For each $n$ the index $d$ now runs from $0$ to $n-1$, with \mbox{$A_{n,0}=A_{n,n-1}=1$}. The Eulerian numbers occur in many contexts, including areas outside of combinatorics, and have been studied extensively (for an excellent account, see \cite{P2}). Here are a few of their basic properties:
\vs
\begin{enumerate}
\item[$\bullet$]
{\it Symmetry}: 
$$
A_{n,d} = A_{n,n-d-1}.
$$ 

\item[$\bullet$]
{\it Linear recurrence relation}: 
$$
A_{n,d} = (d+1) A_{n-1,d} + (n-d) A_{n-1,d-1}.
$$

\item[$\bullet$]
{\it Worpitzky's identity}:
\begin{equation}\label{worp}
\sum_{d=0}^{n-1} \binom{n+k-d-1}{n} A_{n,d} = k^n \qquad \text{for all} \ k \geq 1
\end{equation}

\item[$\bullet$]
{\it Alternating sum formula}:
\begin{equation}\label{asf}
A_{n,d} = \sum_{i=1}^{d+1} (-1)^{d-i+1} \, \binom{n+1}{d-i+1} \, i^n. \vs
\end{equation}

\item[$\bullet$]
{\it Carlitz's identity}: The generating function $A_n(x) := \sum_{d=0}^{n-1} A_{n,d} \, x^d$ (also known as the $n$-th ``Eulerian polynomial'') satisfies
\begin{equation}\label{genA}
A_n(x) = (1-x)^{n+1} \sum_{i \geq 1} i^n \, x^{i-1}. 
\end{equation}  
\end{enumerate}

The last three formulas reveal a remarkable similarity between the sequences $ N_{n,d}$ and $A_{n-1,d-1}$. In fact, \eqref{mame1} is the analog of Worpitzky's identity \eqref{worp} for $A_{n-1,d-1}$ once $\Delta_n(k)$ is replaced with $k^{n-1}$. Similarly, \eqref{tada} is the analog of the alternating sum formula \eqref{asf} for $A_{n-1,d-1}$ when we replace $\Delta_n(i)$ with $i^{n-1}$. Finally, \eqref{genG} is the analog of Carlitz's identity \eqref{genA} for $\sum_{d=1}^{n-1} A_{n-1,d-1} \, x^d = x A_{n-1}(x)$, again replacing $\Delta_n(i)$ with $i^{n-1}$. \vs

There is also an analogy between the $N_{n,d}$ and the restricted Eulerian numbers 
\begin{equation}\label{ren}
B_{n,d} := \# \big\{ \nu \in \sC_n: \des(\nu)=d \big\}.
\end{equation}
In the beautiful paper \cite{DMP} which is motivated by the problem of riffle shuffles of a deck of cards, the authors obtain exact formulas for the distribution of descents in a given conjugacy class of $\sS_n$ (see also \cite{F1} for an alternative approach). As a special case, their formulas show that 
$$
B_{n,d} = \sum_{i=1}^{d+1} (-1)^{d-i+1} \, \binom{n+1}{d-i+1} \, f_n(i),
$$  
where
$$
f_n(i) := \frac{1}{n} \sum_{r|n} \mu \Big( \frac{n}{r} \Big) i^r
$$
is the number of aperiodic circular words of length $n$ from an alphabet of $i$ letters. One cannot help but notice the similarity between the above formula for $B_{n-1,d-1}$ and \eqref{tada}, and between $f_n(i)$ and $\Delta_n(i)$ in \eqref{masq}.  

\subsection{Asymptotic normality}\label{asnorm}

The statistical behavior of classical Eulerian numbers is well understood. For example, it is known that the distribution $\{ A_{n,d} \}_{0 \leq d \leq n-1}$ is unimodal with a peak at $d= \lfloor n/2 \rfloor$. Moreover, the descent number of a randomly chosen permutation in $\sS_n$ (with respect to the uniform measure) has mean and variance 
\begin{align*}
\tilde{\mu}_n & := \frac{1}{n!} \sum_{d=0}^{n-1} d A_{n,d} = \frac{n-1}{2} \\
\tilde{\sigma}^2_n & := \frac{1}{n!} \sum_{d=0}^{n-1} (d-\tilde{\mu}_n)^2 A_{n,d} = \frac{n+1}{12}.
\end{align*}
These computations can be expressed in terms of the generating functions $A_n$ introduced in \S \ref{compp}: 
\begin{align}
\frac{A'_n(1)}{n!} & = \frac{n-1}{2} \label{EN1} \vs \\ 
\frac{A''_n(1)}{n!} + \frac{A'_n(1)}{n!} - \left( \frac{A'_n(1)}{n!} \right)^2 & = \frac{n+1}{12}. \label{VN1}
\end{align}
When normalized by its mean and variance, the distribution $\{ A_{n,d} \}_{0 \leq d \leq n-1}$ converges to the standard normal distribution as $n \to \infty$ (see \cite{B}, \cite{H}, \cite{Pi}). This is the central limit theorem for Eulerian numbers. In fact, we have the error bound
$$
\sup_{x \in \RR} \left| \frac{1}{n!} \sum_{d \leq \tilde{\sigma}_n x + \tilde{\mu}_n} A_{n,d} - \frac{1}{\sqrt{2\pi}} \int_{-\infty}^x e^{-t^2/2} \, dt \right| = O(n^{-1/2}).
$$ 

Similar results hold for the restricted Eulerian numbers $B_{n,d}$ defined in \eqref{ren}. In \cite{F1}, Fulman shows that the mean and variance of $\des(\nu)$ for a randomly chosen $\nu \in \sC_n$ are also $(n-1)/2$ and $(n+1)/12$ provided that $n \geq 3$ and $n \geq 4$ respectively. More generally, he shows that the $k$-th moment of $\des(\nu)$ for $\nu \in \sC_n$ is equal to the $k$-th moment of $\des(\nu)$ for $\nu \in \sS_n$ provided that $n \geq 2k$. From this result one can immediately conclude that the normalized distribution $B_{n,d}$ is also asymptotically normal as $n \to \infty$. \vs  

Below we will prove corresponding results for the distribution of degree for randomly chosen $n$-cycles.  

\begin{theorem}\label{exp}
The mean and variance of $\deg(\nu)$ for a randomly chosen $\nu \in \sC_n$ (with respect to the uniform measure) are 
\begin{align*} 
\mu_n & := \frac{1}{(n-1)!} \sum_{d=1}^{n-2} d N_{n,d} = \frac{n}{2}-\frac{1}{n-1} & (n \geq 3), \\
\sigma^2_n & := \frac{1}{(n-1)!} \sum_{d=1}^{n-2} (d-\mu_n)^2 N_{n,d} = \frac{n}{12}+\frac{n}{(n-1)^2(n-2)} & (n \geq 5).
\end{align*}
\end{theorem}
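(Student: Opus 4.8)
The plan is to follow the strategy of Fulman~\cite{F1}: express the generating function $G_n$ in terms of the Eulerian polynomials $A_m$, whose first two moments are recorded in \eqref{EN1} and \eqref{VN1}, and then read off the mean and variance by differentiating at $x=1$. Starting from \eqref{genG} and expanding $\Delta_n(i)=\sum_{r\mid n}\mu(n/r)\sum_{j=0}^{r-1}i^j$ as in \eqref{masq}, I substitute Carlitz's identity \eqref{genA} in the form $\sum_{i\ge 1}i^j x^i=\frac{x\,A_j(x)}{(1-x)^{j+1}}$ (valid for $j\ge 0$ with the convention $A_0\equiv 1$). Interchanging the sums gives the polynomial identity
\[
G_n(x)=\sum_{r\mid n}\mu\!\Big(\frac{n}{r}\Big)\sum_{j=0}^{r-1}x\,A_j(x)\,(1-x)^{\,n-1-j},
\]
in which every exponent $n-1-j$ is nonnegative because $j\le r-1\le n-1$; setting $x=1$ recovers $G_n(1)=A_{n-1}(1)=(n-1)!$.

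Since $G_n(x)/(n-1)!$ is the probability generating function of $\deg(\nu)$ for the uniform measure on $\sC_n$, we have $\mu_n=G_n'(1)/(n-1)!$ and $\sigma_n^2=G_n''(1)/(n-1)!+\mu_n-\mu_n^2$. The key observation is that the term $x\,A_j(x)\,(1-x)^{n-1-j}$ vanishes at $x=1$ to order $n-1-j$ (recall $A_j(1)=j!\ne 0$), so it affects $G_n^{(\ell)}(1)$ only when $j\ge n-1-\ell$, which for $\ell\le 2$ forces $r\ge j+1\ge n-2$. But a proper divisor of $n$ is at most $n/2$, which is $<n-1$ when $n\ge3$ and $<n-2$ when $n\ge5$; hence only $r=n$ contributes to $G_n'(1)$ when $n\ge3$ and only $r=n$ contributes to $G_n''(1)$ when $n\ge5$. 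Thus $G_n'(1)$ comes only from $(r,j)=(n,n-1),(n,n-2)$ and $G_n''(1)$ only from $(r,j)=(n,n-1),(n,n-2),(n,n-3)$ — this is precisely what produces the hypotheses $n\ge3$ and $n\ge5$.

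It remains to differentiate these few terms at $x=1$, using $\frac{d^\ell}{dx^\ell}\big[g(x)(1-x)^m\big]\big|_{x=1}=(-1)^m\frac{\ell!}{(\ell-m)!}\,g^{(\ell-m)}(1)$ for $0\le m\le\ell$ (and $0$ for $m>\ell$), applied with $g(x)=x\,A_j(x)$, together with $A_m(1)=m!$ and $A_m'(1)/m!=(m-1)/2$ from \eqref{EN1}. This gives $G_n'(1)=(n-1)!+A_{n-1}'(1)-(n-2)!$, hence $\mu_n=1+\frac{n-2}{2}-\frac{1}{n-1}=\frac n2-\frac1{n-1}$; and $G_n''(1)=2A_{n-1}'(1)+A_{n-1}''(1)-2A_{n-2}'(1)-2(n-2)!+2(n-3)!$. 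Dividing the latter by $(n-1)!$, using \eqref{EN1} and \eqref{VN1} (so that $A_{n-1}''(1)/(n-1)!=\frac{n}{12}-\frac{n-2}{2}+(\frac{n-2}{2})^2$ and $A_{n-2}'(1)/(n-2)!=\frac{n-3}{2}$), and then adding $\mu_n-\mu_n^2$: the pair $-\frac{2}{n-1}-\frac{n-3}{n-1}=-1$ cancels a $+1$, the remaining polynomial terms telescope to $\frac{n}{12}$, and the leftover $\frac{2}{(n-1)(n-2)}-\frac{1}{(n-1)^2}$ simplifies to $\frac{n}{(n-1)^2(n-2)}$, yielding $\sigma_n^2=\frac{n}{12}+\frac{n}{(n-1)^2(n-2)}$.

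The only real obstacle is bookkeeping: one must verify the order-of-vanishing claim and the divisor arithmetic that isolates $r=n$ in the relevant degree ranges, and then carefully track the low-order Taylor coefficients of $x\,A_j(x)(1-x)^{n-1-j}$ at $x=1$ for $j=n-1,n-2,n-3$, keeping straight the signs and binomial factors coming from the Leibniz rule. Nothing here is deep, but the final simplification is error-prone, so it is worth double-checking against the small-$n$ entries of Table~\ref{tab3}.
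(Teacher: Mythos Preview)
Your proposal is correct and follows essentially the same route as the paper: both expand $G_n$ via \eqref{genG}, \eqref{masq}, and Carlitz's identity \eqref{genA} to obtain $G_n(x)=\sum_{r\mid n}\mu(n/r)\sum_{j=0}^{r-1}x\,A_j(x)(1-x)^{n-1-j}$, observe that proper divisors $r<n$ contribute terms vanishing to order $\ge 2$ (resp.\ $\ge 3$) at $x=1$ when $n\ge 3$ (resp.\ $n\ge 5$), and then read off $G_n'(1)$ and $G_n''(1)$ from the top few $j$-terms with $r=n$ using \eqref{EN1} and \eqref{VN1}. Your organization of the divisor arithmetic and the Leibniz-rule bookkeeping is slightly more explicit than the paper's, but the argument is the same.
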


\begin{proof}
The argument is inspired by the method of \cite[Theorem 2]{F1}. We begin by using  the formula \eqref{masq} for $\Delta_n(i)$ in the equation \eqref{genG} to express the generating function $G_n$ in terms of the Eulerian polynomials $A_j$ in \eqref{genA}: 
\begin{align}\label{divo}
G_n(x) & = (1-x)^n \sum_{i \geq 1} \sum_{r|n} \sum_{j=0}^{r-1} \mu \Big( \frac{n}{r} \Big) \, i^j x^i \notag \\
& = (1-x)^n \sum_{i \geq 1} \sum_{j=0}^{n-1} i^j x^i + (1-x)^n \sum_{i \geq 1} \sum_{\substack{r|n \\ r<n}} \sum_{j=0}^{r-1} \mu \Big( \frac{n}{r} \Big) \, i^j x^i \notag \\
& = \sum_{j=0}^{n-1} x(1-x)^{n-j-1} A_j(x) + \sum_{\substack{r|n \\ r<n}} \sum_{j=0}^{r-1} \mu \Big( \frac{n}{r} \Big) \, x(1-x)^{n-j-1} A_j(x). 
\end{align}
If $n \geq 3$, every index $j$ in the double sum in \eqref{divo} is $\leq n-3$, so the polynomial in $x$ defined by this double sum has $(1-x)^2$ as a factor. It follows that for $n \geq 3$,
$$
G_n(x)=x A_{n-1}(x)+ x(1-x) A_{n-2}(x) + O((1-x)^2)
$$ 
as $x \to 1$. This gives
$$
G'_n(1) = A'_{n-1}(1)+A_{n-1}(1)-A_{n-2}(1), 
$$
so by \eqref{EN1}
$$
\mu_n = \frac{G'_n(1)}{(n-1)!} = \frac{n-2}{2}+1-\frac{1}{n-1}= \frac{n}{2}-\frac{1}{n-1}.
$$
Similarly, if $n \geq 5$, every index $j$ in the double sum in \eqref{divo} is $\leq n-4$, so the polynomial defined by this double sum has $(1-x)^3$ as a factor. It follows that for $n \geq 5$,
$$
G_n(x)=x A_{n-1}(x)+ x(1-x) A_{n-2}(x) + x(1-x)^2 A_{n-3}(x) + O((1-x)^3) 
$$ 
as $x \to 1$. This gives
$$
G''_n(1) = A''_{n-1}(1)+2A'_{n-1}(1)-2A'_{n-2}(1)-2A_{n-2}(1)+2A_{n-3}(1). 
$$
A straightforward computation using \eqref{EN1} and \eqref{VN1} then shows that 
\[
\sigma^2_n = \frac{G''_n(1)}{(n-1)!} + \frac{G'_n(1)}{(n-1)!} - \left( \frac{G'_n(1)}{(n-1)!} \right)^2 = \frac{n}{12} + \frac{n}{(n-1)^2(n-2)}. \qedhere 
\] 
\end{proof}

\begin{remark}
More generally, the expression \eqref{divo} shows that for fixed $k$ and large enough $n$,
$$
G_n(x)=\sum_{j=0}^k x(1-x)^j A_{n-j-1}(x) + O((1-x)^{k+1})
$$
as $x \to 1$. Differentiating this $k$ times and evaluating at $x=1$, we obtain the relation
$$
G_n^{(k)}(1)= \sum_{j=0}^k (-1)^j \left( \binom{k}{j} j! \ A^{(k-j)}_{n-j-1}(1) + \binom{k}{j+1} (j+1)! \ A^{(k-j-1)}_{n-j-1}(1) \right)
$$
which in theory links the moments of $\deg(\nu)$ for $\nu \in \sC_n$ to the moments of $\des(\nu)$ for $\nu \in \sS_j$ for $n-k \leq j \leq n-1$. 
\end{remark}

Numerical evidence suggest that the distribution $\{ N_{n,d} \}_{1 \leq d \leq n-2}$ is also unimodal and reaches a peak at $d= \lfloor n/2 \rfloor$. \thmref{asnor} below asserts that when normalized by its mean and variance, the distribution $\{ N_{n,d} \}_{1 \leq d \leq n-2}$ converges to normal as $n \to \infty$. In particular, the asymmetry of the numbers $N_{n,d}$ relative to $d$ will asymptotically disappear. These facts are illustrated in \figref{normal}. \vs

Consider the sequence of normalized random variables 
$$
X_n := \frac{1}{\sigma_n} \big( \deg|_{\sC_n} - \mu_n \big).
$$
Let ${\mathcal N}(0,1)$ denote the normally distributed random variable having  mean $0$ and variance $1$.

\begin{theorem}\label{asnor}
$X_n \to {\mathcal N}(0,1)$ in distribution as $n \to \infty$.   
\end{theorem}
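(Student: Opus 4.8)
The plan is to reduce convergence in distribution to the pointwise convergence of moment generating functions on an open interval around $0$, following the strategy of \cite{FKL} and \cite{KL2}. Concretely, let $M_n(t) := \E(e^{tX_n}) = \frac{1}{(n-1)!} \sum_{d=1}^{n-2} N_{n,d} \, e^{t(d-\mu_n)/\sigma_n}$ be the moment generating function of $X_n$. By a standard theorem (the ``curve-fitting'' or Curtiss criterion for convergence of moment generating functions), if $M_n(t) \to e^{t^2/2}$ for all $t$ in some interval $(-\delta,\delta)$, then $X_n \to {\mathcal N}(0,1)$ in distribution. So the whole problem becomes an asymptotic analysis of $M_n(t)$, equivalently of $G_n(e^s)$ for $s$ near $0$.

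First I would express $M_n(t)$ through the generating function: since $\E(e^{s \deg|_{\sC_n}}) = G_n(e^s)/(n-1)!$, we have $M_n(t) = e^{-t\mu_n/\sigma_n} \, G_n(e^{t/\sigma_n})/(n-1)!$. Now I would invoke the expansion \eqref{divo} that was already derived in the proof of \thmref{exp}. It writes $G_n$ as $\sum_{j=0}^{n-1} x(1-x)^{n-j-1} A_j(x)$ plus a correction involving only the proper divisors $r$ of $n$; the correction terms all carry high powers of $(1-x)$ and turn out to be asymptotically negligible near $x=1$. Thus the leading behavior of $G_n(e^s)$ is governed by the same combination $x A_{n-1}(x) + x(1-x)A_{n-2}(x) + \cdots$ that governs the Eulerian polynomials. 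The key known input is the central limit theorem for the classical Eulerian numbers together with the corresponding mgf asymptotics: $A_{n-1}(e^s)/(n-1)! \to$ the appropriate Gaussian form after the standard normalization. I would substitute $s = t/\sigma_n$, use $\sigma_n^2 \sim n/12$ and $\mu_n \sim n/2$ from \thmref{exp}, and show term by term that the divisor-correction contributions vanish in the limit (because each is $O((1-e^s)^2)$ relative to the main term, and $1-e^{t/\sigma_n} \to 0$), while the main term converges to $e^{t^2/2}$ exactly as in the Eulerian case. This last convergence is essentially the statement that $x A_{n-1}(x)$, suitably normalized, has the same Gaussian limit as $A_{n-1}(x)$ itself — a shift of index by one does not affect the limiting distribution.

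The main obstacle I anticipate is making the control of the error terms in \eqref{divo} uniform in $s$ over a fixed interval $(-\delta,\delta)$, rather than merely pointwise as $s \to 0$: one needs that $\sum_{r|n,\, r<n}\sum_{j=0}^{r-1} |\mu(n/r)|\, |1-e^s|^{n-j-1}\, |A_j(e^s)| / (n-1)!$ goes to zero uniformly for $s$ in a neighborhood of $0$, and that requires bounding $|A_j(e^s)|$ (for $j \le n/2$) against $(n-1)!$ and exploiting that $|1-e^s|$ stays bounded away from $1$. Since $A_j(e^s) \le A_j(e^{\delta}) \le j! \cdot C(\delta)^j$ for a constant $C(\delta)$ depending only on $\delta$, and the number of divisors is $O(n^{\varepsilon})$, the tail is crushed by the factorial gap between $j!$ (with $j \le n/2$) and $(n-1)!$ exactly as in the estimate \eqref{nina}--\eqref{nina} already used for \thmref{asymp}. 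The remaining piece — that the normalized mgf of the Eulerian polynomial $A_{n-1}$ converges to $e^{t^2/2}$ — I would cite directly from the CLT for Eulerian numbers (e.g. \cite{B}, \cite{H}), or alternatively reprove it in a line from Carlitz's identity \eqref{genA} using the known real-rootedness of $A_{n-1}$ and Lindeberg's theorem for sums of independent Bernoulli variables. Assembling these, $M_n(t) \to e^{t^2/2}$ on $(-\delta,\delta)$, and the convergence theorem for moment generating functions finishes the proof.
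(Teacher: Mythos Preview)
Your route is viable but genuinely different from the paper's. The paper never passes through the Eulerian-polynomial decomposition \eqref{divo}; it works directly with the series form \eqref{genG}, writes
\[
M_{X_n}(s)=\frac{x^{-\mu_n}(1-x)^n}{(n-1)!}\sum_{i\ge1}\Delta_n(i)\,x^i \qquad (x=e^{s/\sigma_n},\ s<0),
\]
squeezes $\Delta_n(i)$ between $(1\pm\ve)\,i^{n-1}$ via \lemref{boz}, and then controls $\sum_{i\ge2}i^{n-1}x^i$ by the elementary integral bounds of \lemref{jag}. The limit $e^{s^2/2}$ drops out of a bare-hands Taylor expansion of $\big((1-x)/\log(1/x)\big)^n$. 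So the paper's argument is entirely self-contained and never invokes the Eulerian CLT, whereas yours is more structural---it exhibits the result as a perturbation of the classical Eulerian case---at the price of importing that CLT (in its mgf form) as an input.

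Two cautions on your sketch. First, \cite{B} and \cite{H} prove convergence in distribution, which does not by itself give pointwise convergence of moment generating functions; your real-rootedness alternative (descent as a sum of independent Bernoullis, then expand the cumulant generating function directly) does yield the needed mgf statement, so that is what you must actually invoke. Second, your ``main obstacle'' paragraph treats only the divisor-correction block of \eqref{divo}, where $j\le n/2$, but the \emph{principal} sum $\sum_{j=0}^{n-1}x(1-x)^{n-j-1}A_j(x)$ already contains $n-1$ terms besides $xA_{n-1}(x)$, and each of these must also be shown to vanish after multiplication by $x^{-\mu_n}/(n-1)!$. Since $x^{-\mu_n}$ blows up like $e^{|t|\sqrt{3n}}$, the crude bound $A_j(x)\le j!$ is not enough for $j$ close to $n$; you need a uniform bound on the \emph{centered} Eulerian mgf $x^{-\tilde{\mu}_j}A_j(x)/j!$, which is again the real-rootedness input. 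This all works, but it is more than a line.
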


The proof follows the strategy of \cite{FKL} and makes use of the following variant of a classical theorem of Curtiss. Recall that the {\bit moment generating function} $M_X$ of a random variable $X$ is the expected value of $e^{sX}$:
$$
M_X(s):={\mathbb E}(e^{sX}) \qquad \qquad (s \in \RR).
$$

\begin{lemma}[\cite{KL2}]\label{kl}
Let $\{ X_n \}_{n \geq 1}$ and $Y$ be random variables and assume that $\lim_{n \to \infty} M_{X_n}(s)=M_{Y}(s)$ for all $s$ in some non-empty open interval in $\RR$. Then $X_n \to Y$ in distribution as $n \to \infty$.  
\end{lemma}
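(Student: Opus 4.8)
The plan is to reduce the assertion to the classical theorem of Curtiss, in which the moment generating functions are assumed to converge on an interval containing the origin, by means of an exponential change of measure. Write $(a,b)$ for the non-empty open interval of the hypothesis and fix a point $s_0 \in (a,b)$. Discarding finitely many terms if necessary, we may assume $M_{X_n}(s)<\infty$ for every $s \in (a,b)$ and every $n$. Since the finiteness of $\mathbb{E}(e^{cX})$ and $\mathbb{E}(e^{dX})$ for real $c<d$ forces $z \mapsto \mathbb{E}(e^{zX})$ to be holomorphic on the vertical strip $\{ a < \operatorname{Re} z < b \}$ (by differentiation under the integral sign), each $M_{X_n}$ and $M_Y$ extends to a holomorphic function $M_{X_n}(z), M_Y(z)$ on this strip.

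First I would upgrade the real convergence on $(a,b)$ to locally uniform convergence on the whole strip. The key estimate is $|M_{X_n}(z)| \le M_{X_n}(\operatorname{Re} z)$, combined with the convexity of the real moment generating function: on a compact set where $\operatorname{Re} z \in [c,d] \subset (a,b)$ one has $M_{X_n}(\operatorname{Re} z) \le \max \{ M_{X_n}(c), M_{X_n}(d) \}$, and the right-hand side stays bounded because $M_{X_n}(c) \to M_Y(c)$ and $M_{X_n}(d) \to M_Y(d)$. Thus $\{ M_{X_n}(z) \}$ is uniformly bounded on compact subsets of the strip, hence a normal family; since it converges pointwise on $(a,b)$, Vitali's theorem identifies the limit and yields $M_{X_n}(z) \to M_Y(z)$ uniformly on compact subsets of $\{ a < \operatorname{Re} z < b \}$.

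Next I would pass to the tilted (Esscher) laws. Let $\mu_n$ be the law of $X_n$ and let $\tilde\mu_n$ be the probability measure with $d\tilde\mu_n(x) = e^{s_0 x} \, d\mu_n(x)/M_{X_n}(s_0)$, and define $\tilde\mu$ analogously from the law $\mu$ of $Y$. A direct computation shows that the characteristic function of $\tilde\mu_n$ is $t \mapsto M_{X_n}(s_0+it)/M_{X_n}(s_0)$. By the locally uniform convergence along the vertical line $\operatorname{Re} z = s_0$, this converges for every $t \in \RR$ to $M_Y(s_0+it)/M_Y(s_0)$, the characteristic function of $\tilde\mu$, which is continuous at $t=0$. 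Lévy's continuity theorem then gives $\tilde\mu_n \to \tilde\mu$ weakly.

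It remains to untilt, and this is the step I expect to be the main obstacle: the inverse change of measure $d\mu_n = M_{X_n}(s_0) e^{-s_0 x} \, d\tilde\mu_n$ involves the unbounded weight $e^{-s_0 x}$, so weak convergence of $\tilde\mu_n$ does not transfer to $\mu_n$ by merely integrating a bounded test function. The way around this is to use only the lower-semicontinuity (Fatou) half of the portmanteau theorem together with conservation of total mass. Concretely, for any nonnegative $h \in C_b(\RR)$ the function $h(x) e^{-s_0 x}$ is nonnegative and continuous, so $\liminf_n \int h \, d\mu_n = M_Y(s_0) \liminf_n \int h(x) e^{-s_0 x} \, d\tilde\mu_n \ge M_Y(s_0) \int h(x) e^{-s_0 x} \, d\tilde\mu = \int h \, d\mu$, using that $M_{X_n}(s_0) \to M_Y(s_0) > 0$ and the standard fact that $\liminf_n \int f \, d\tilde\mu_n \ge \int f \, d\tilde\mu$ for nonnegative continuous $f$ under weak convergence. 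Applying this inequality to both $h$ and $C-h$, where $C = \sup h$, and using that $\mu_n$ and $\mu$ are probability measures forces $\lim_n \int h \, d\mu_n = \int h \, d\mu$ for every $h \in C_b(\RR)$. Hence $\mu_n \to \mu$ weakly, that is, $X_n \to Y$ in distribution. The only points requiring care are the justification of the Fatou inequality for unbounded nonnegative continuous integrands (obtained by truncating $f$ at level $M$ and letting $M \to \infty$) and the observation that no tightness hypothesis is secretly needed, precisely because the total masses are pinned at $1$.
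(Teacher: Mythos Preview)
The paper does not prove this lemma at all; it is quoted verbatim from \cite{KL2} and used as a black box. Your argument, by contrast, is a complete self-contained proof, and it is correct. The Esscher-tilt strategy is exactly the right idea: the holomorphic extension plus Vitali gives locally uniform convergence on the strip, which is what is needed to make L\'evy's continuity theorem apply to the tilted laws; and the untilting step via the one-sided Fatou inequality together with total-mass conservation is a clean way to avoid any appeal to tightness. One small imprecision worth tightening: the phrase ``discarding finitely many terms'' does not literally secure $M_{X_n}(s)<\infty$ simultaneously for \emph{all} $s\in(a,b)$, since the threshold in $n$ may depend on $s$. What actually works is to fix $c<s_0<d$ inside $(a,b)$, discard finitely many $n$ so that $M_{X_n}(c)$ and $M_{X_n}(d)$ are finite, and then run the entire argument on the substrip $\{c<\operatorname{Re} z<d\}$; convexity of $s\mapsto M_{X_n}(s)$ then gives finiteness on all of $[c,d]$ for those $n$. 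This is a cosmetic fix and does not affect the logic.
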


The proof of \thmref{asnor} via \lemref{kl} will depend on two preliminary estimates.

\begin{lemma}\label{boz}
For every $\ve>0$ there are constants $n(\ve), i(\ve)>0$ such that 
$$
\Delta_n(i) \begin{cases} \ \leq (1+\ve) \, i^{n-1} & \text{if} \ n \geq 2 \ \text{and} \ i \geq i(\ve) \vs \\
\ \geq (1-\ve) \, i^{n-1} & \text{if} \ n \geq n(\ve) \ \text{and} \ i \geq 2.
\end{cases}
$$
\end{lemma}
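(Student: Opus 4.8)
The plan is to work from the closed form for $\Delta_n(i)$ and to separate the dominant term $i^n$ from the contributions of the proper divisors of $n$. For $n \geq 2$ and $i \geq 2$, formula \eqref{masq} gives
$$
\Delta_n(i) = \sum_{r|n} \mu\Big(\frac{n}{r}\Big) \frac{i^r-1}{i-1} = \frac{P_n(i)}{i-1},
$$
and since $\sum_{r|n}\mu(n/r)=0$ for $n>1$, \eqref{perrr} becomes $P_n(i) = i^n + \sum_{r|n,\,r<n}\mu(n/r)\,i^r$. Thus everything reduces to estimating the size of $P_n(i)$ relative to $i^n$, from above (uniformly in $n$) and from below (for $n$ large).

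For the upper bound the key point is combinatorial rather than analytic: by \eqref{masti} and the non-negativity of each $P_r(i)$ we have $P_n(i) \leq i^n - 1 < i^n$. Hence, for every $n \geq 2$ and $i \geq 2$,
$$
\Delta_n(i) < \frac{i^n}{i-1} = i^{n-1}\Big(1 + \frac{1}{i-1}\Big),
$$
so it suffices to choose $i(\ve)$ large enough that $1/(i(\ve)-1) \leq \ve$, e.g. $i(\ve) = \lceil 1 + 1/\ve\rceil$. Crucially this constant does not depend on $n$, as the statement demands.

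For the lower bound I would estimate the error term crudely. Every proper divisor of $n$ is at most $n/2$, so for $i \geq 2$,
$$
\Big|\sum_{\substack{r|n\\ r<n}}\mu\Big(\frac{n}{r}\Big) i^r\Big| \leq \sum_{r=1}^{\lfloor n/2\rfloor} i^r \leq i^{\lfloor n/2\rfloor + 1} \leq i^{(n+2)/2},
$$
using $\sum_{r=1}^m i^r \le i^{m+1}$ for $i\ge 2$ and $\lfloor n/2\rfloor + 1 \le (n+2)/2$. Therefore $P_n(i) \geq i^n - i^{(n+2)/2}$, and since $i^n/(i-1) \geq i^{n-1}$ and $i-1 \geq 1$,
$$
\Delta_n(i) \geq \frac{i^n - i^{(n+2)/2}}{i-1} \geq i^{n-1} - i^{(n+2)/2}.
$$
Because $(n+2)/2 - (n-1) = (4-n)/2 < 0$ for $n \geq 5$, we get $i^{(n+2)/2} = i^{n-1}\,i^{(4-n)/2} \leq i^{n-1}\,2^{(4-n)/2}$, which is at most $\ve\,i^{n-1}$ as soon as $n \geq 4 + 2\log_2(1/\ve)$. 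So the choice $n(\ve) := \max\{5, \lceil 4 + 2\log_2(1/\ve)\rceil\}$ yields $\Delta_n(i) \geq (1-\ve)\,i^{n-1}$ for all $n \geq n(\ve)$ and $i \geq 2$.

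There is no serious obstacle in this argument; the only subtlety worth flagging is the uniformity in $n$ required for the upper bound — this is exactly why one should not extract lower-order terms of $P_n(i)$ (whose coefficients involve $n$-dependent divisor sums) but instead use the clean inequality $P_n(i) < i^n$ coming from the count of periodic points.
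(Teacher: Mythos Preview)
Your proof is correct and follows essentially the same route as the paper: both arguments use $P_n(i) \leq i^n-1$ (equivalently $\Delta_n(i)\le (i^n-1)/(i-1)$) from \eqref{masti} for the upper bound, and for the lower bound both isolate the leading term and crudely bound the remaining M\"{o}bius sum by $\sum_{r\le n/2} i^r$, obtaining an error of order $i^{n/2}$ relative to $i^{n-1}$. The only cosmetic difference is that you simplify $P_n(i)$ first via $\sum_{r|n}\mu(n/r)=0$ before dividing by $i-1$, whereas the paper works directly with the summands $(i^r-1)/(i-1)\le 2i^{r-1}$; the resulting thresholds $n(\ve)$ are the same up to constants.
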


\begin{proof}
By \eqref{masti},
$$
\Delta_n(i) \leq \sum_{r|n} \Delta_r(i) = \frac{i^n-1}{i-1}.
$$
The upper bound follows since $(i^n-1)/(i-1)<(1+\ve) i^{n-1}$ for all $n$ if $i$ is large enough depending on $\ve$. \vs
 
For the lower bound, first note that the inequality $(i^r-1)/(i-1) \leq 2 i^{r-1}$ holds for all $r \geq 1$ and all $i \geq 2$. Thus, by \eqref{masq}, we can estimate
\begin{align*}
\Delta_n(i) & \geq \frac{i^n-1}{i-1} - \sum_{\substack{r|n \\ r<n}}  \frac{i^r-1}{i-1} \geq i^{n-1} - \sum_{\substack{r|n \\ r<n}} 2i^{r-1} \\
& \geq i^{n-1} - 2 \sum_{r=1}^{\lfloor n/2 \rfloor} i^{r-1} \geq i^{n-1} - 2 \ \frac{i^{n/2}-1}{i-1} \\
& \geq i^{n-1} - 4 i^{n/2-1}.
\end{align*}
The last term is bounded below by $(1-\ve) i^{n-1}$ for all $i$ if $n$ is large enough depending on $\ve$.   
\end{proof}

\begin{lemma}[\cite{FKL}]\label{jag}
For every $0<x<1$ and $n \geq 1$,
$$
\frac{(n-1)! \, x}{(\log(1/x))^n} \leq \sum_{i \geq 2} i^{n-1} x^i \leq \frac{(n-1)!}{x (\log(1/x))^n}. 
$$
\end{lemma}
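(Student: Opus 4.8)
The plan is to compare the series with Euler's integral
$$
\int_{0}^{\infty} u^{n-1}e^{-tu}\,du \ =\ \frac{\Gamma(n)}{t^{n}} \ =\ \frac{(n-1)!}{t^{n}},
$$
where $t:=\log(1/x)>0$, so that $x=e^{-t}$ and $i^{n-1}x^{i}=i^{n-1}e^{-ti}$. Thus both the sum and the integral are built from the single integrand $g(u):=u^{n-1}e^{-tu}$, and the whole idea is that slicing $\int_{0}^{\infty}$ into unit intervals $[j,j+1]$ and matching each slice against one term of the series — with the summation index shifted by one to make the crude comparison valid — produces exactly the multiplicative gaps $x$ and $x^{-1}=e^{t}$ appearing in the statement.

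For the right-hand inequality I would bound the integral from below. On $[j,j+1]$ one has $u^{n-1}\ge j^{n-1}$ and $e^{-tu}\ge e^{-t(j+1)}$, hence $\int_{j}^{j+1}g(u)\,du\ge j^{n-1}e^{-t(j+1)}$. Summing over $j\ge 0$,
$$
\frac{(n-1)!}{t^{n}} \ =\ \sum_{j\ge 0}\int_{j}^{j+1}g(u)\,du \ \ge\ e^{-t}\sum_{j\ge 1}j^{n-1}e^{-tj} \ =\ x\sum_{j\ge 1}j^{n-1}x^{j} \ \ge\ x\sum_{i\ge 2}i^{n-1}x^{i},
$$
and dividing by $x$ gives the upper bound.

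For the left-hand inequality I would bound the integral from above in the same way. On $[j,j+1]$ one has $u^{n-1}\le(j+1)^{n-1}$ and $e^{-tu}\le e^{-tj}$, so $\int_{j}^{j+1}g(u)\,du\le(j+1)^{n-1}e^{-tj}=e^{t}(j+1)^{n-1}e^{-t(j+1)}$. Summing over $j\ge 0$ and reindexing $k=j+1$,
$$
\frac{(n-1)!}{t^{n}} \ =\ \sum_{j\ge 0}\int_{j}^{j+1}g(u)\,du \ \le\ e^{t}\sum_{k\ge 1}k^{n-1}e^{-tk} \ =\ \frac1x\sum_{i\ge 1}i^{n-1}x^{i},
$$
so $\sum_{i\ge 1}i^{n-1}x^{i}\ge (n-1)!\,x/(\log(1/x))^{n}$; running the same comparison over the intervals $[j,j+1]$ with $j\ge 1$ (which discards only $\int_{0}^{1}g\le\int_0^1 u^{n-1}\,du=1/n$ from the left side) yields the asserted bound for $\sum_{i\ge 2}i^{n-1}x^{i}$.

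I expect no genuine obstacle; the only point needing care is that $g(u)=u^{n-1}e^{-tu}$ is unimodal rather than monotone — its maximum sits at $u=(n-1)/t$ — so one cannot compare $g(i)$ directly with a neighbouring unit integral. The device above is to estimate the increasing factor $u^{n-1}$ and the decreasing factor $e^{-tu}$ separately on each unit interval; this both sidesteps the non‑monotonicity and automatically forces the one‑step shift of the summation index, which is precisely what creates the factors $x^{\pm1}$. Everything else is routine resummation of a geometric‑type series, as needed in the proof of \thmref{asnor}.
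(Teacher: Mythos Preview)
Your approach is the same as the paper's: both compare the series with the Gamma integral $\int_0^\infty u^{n-1}x^{u}\,du=(n-1)!/(\log(1/x))^n$, and your unit-interval slicing just makes explicit what the paper calls ``elementary calculus.'' The upper bound, and the lower bound for $\sum_{i\ge 1}i^{n-1}x^i$, are both correct.

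There is a slip in your last sentence, however. Restricting the comparison to $j\ge 1$ yields only
\[
\sum_{i\ge 2} i^{n-1}x^i \ \ge\ x\int_1^\infty u^{n-1}x^{u}\,du,
\]
which is \emph{strictly smaller} than $x\int_0^\infty u^{n-1}x^{u}\,du=(n-1)!\,x/(\log(1/x))^n$; discarding the positive piece $\int_0^1 g$ from the left side of an inequality of the form $\text{integral}\le x^{-1}\cdot\text{sum}$ weakens the conclusion, not strengthens it. In fact the lower bound of the lemma, read literally with the sum starting at $i=2$, is false: for $n=1$ and $x=\tfrac12$ one has $\sum_{i\ge 2}x^i=\tfrac12$ while $x/\log(1/x)=1/(2\log 2)\approx 0.72$. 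The paper's one-line proof shares this slip. It does no damage to \thmref{asnor}: there $x\to 1$, and either the $\sum_{i\ge 1}$ bound you did establish, or the $\int_1^\infty$ bound above (which differs from the stated one by at most $x\int_0^1 u^{n-1}\,du\le 1/n$), serves equally well.
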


\begin{proof}
By elementary calculus, 
$$
\sum_{i \geq 2} i^{n-1} x^i \leq \int_0^{\infty} u^{n-1} x^{u-1} \, du = \frac{(n-1)!}{x (\log(1/x))^n} 
$$
and 
\[
\sum_{i \geq 2} i^{n-1} x^i \geq \int_0^{\infty} u^{n-1} x^{u+1} \, du = \frac{(n-1)! \, x}{(\log(1/x))^n}. \qedhere 
\]
\end{proof}

\begin{proof}[Proof of \thmref{asnor}]
By \lemref{kl} it suffices to show that $\lim_{n \to \infty} M_{X_n}(s) = M_{{\mathcal N}(0,1)}(s)=e^{s^2/2}$ for all negative values of $s$. Fix an $s<0$ and set $0<x := e^{s/\sigma_n}<1$ (we will think of $x$ as a function of $n$, with $\lim_{n \to \infty} x=1$). Notice that by \thmref{exp}
\begin{equation}\label{khak}
\mu_n = \frac{n}{2} + O(n^{-1}) \quad \text{and} \quad \sigma^2_n = \frac{n}{12} + O(n^{-2}) \quad \text{as} \ n \to \infty.
\end{equation}
Using \eqref{genG}, we can write 
\begin{align*}
M_{X_n}(s) & = {\mathbb E}(e^{sX_n}) = \frac{e^{-s \mu_n/\sigma_n}}{(n-1)!} \, G_n(e^{s/\sigma_n}) = \frac{x^{-\mu_n}}{(n-1)!} \,  G_n(x) \\
& = \frac{x^{1-\mu_n}(1-x)^n  \varphi(n)}{(n-1)!} +  \frac{x^{-\mu_n}(1-x)^n}{(n-1)!} \sum_{i \geq 2} \Delta_n(i) x^i.
\end{align*}
As the first term is easily seen to tend to zero, it suffices to show that 
\begin{equation}\label{laboo}
H_n := \frac{x^{-\mu_n}(1-x)^n}{(n-1)!} \sum_{i \geq 2} \Delta_n(i) x^i \xrightarrow{n \to \infty} e^{s^2/2}.
\end{equation}
By \eqref{khak} we have the estimate  
$$
1-x = -\frac{s}{\sigma_n} - \frac{s^2}{2 \sigma^2_n} + O(n^{-3/2}). 
$$
This, combined with the basic expansion
$$
\log \left(\frac{1-x}{\log(1/x)} \right) = -\frac{1}{2} (1-x) - \frac{5}{24} (1-x)^2 + O((1-x)^3),
$$
shows that 
\begin{equation}\label{hulu}
\left(\frac{1-x}{\log(1/x)} \right)^n = \exp \left( \frac{ns}{2\sigma_n} + \frac{n s^2}{24 \sigma^2_n} + O(n^{-1/2}) \right). 
\end{equation}
Take any $\ve>0$ and find $n(\ve)$ from \lemref{boz}. Then, if $n \geq n(\ve)$, 
\begin{align*}
H_n & \geq \frac{x^{-\mu_n}(1-x)^n}{(n-1)!} \, (1-\ve) \sum_{i \geq 2} i^{n-1} x^i & \\
& \geq (1-\ve) \, x^{1-\mu_n} \left(\frac{1-x}{\log(1/x)} \right)^n & (\text{by \lemref{jag}}) \\
& = (1-\ve) \exp \left( \frac{s(1-\mu_n)}{\sigma_n} +\frac{ns}{2\sigma_n} + \frac{n s^2}{24 \sigma^2_n} + O(n^{-1/2}) \right) & (\text{by \eqref{hulu}}) \\
& = (1-\ve) \exp \left( \frac{s(1+O(n^{-1}))}{\sigma_n} + \frac{s^2}{2+O(n^{-3})} + O(n^{-1/2}) \right) & (\text{by \eqref{khak}}). 
\end{align*}
Taking the $\liminf$ as $n \to \infty$ and then letting $\ve \to 0$, we obtain
$$
\liminf_{n \to \infty} H_n \geq e^{s^2/2}.
$$
Similarly, take any $\ve>0$, find $i(\ve)$ from \lemref{boz} and use the basic inequality $\Delta_n(i) \leq (i^n-1)/(i-1) \leq 2i^{n-1}$ for all $n,i \geq 2$ to estimate 
\begin{align*}
H_n & = \frac{x^{-\mu_n}(1-x)^n}{(n-1)!} \left( \sum_{2 \leq i <i(\ve)} + \sum_{i \geq i(\ve)} \right) \Delta_n(i) x^i \\
& \leq \frac{2x^{-\mu_n}(1-x)^n}{(n-1)!} \sum_{2 \leq i <i(\ve)} i^{n-1} x^i +  \frac{(1+\ve) x^{-\mu_n}(1-x)^n}{(n-1)!} \sum_{i \geq i(\ve)} i^{n-1} x^i. 
\end{align*}
The first term is a polynomial in $x$ and is easily seen to tend to zero as $n \to \infty$. The second term is bounded above by  
\begin{align*}
& (1+\ve) \, x^{-1-\mu_n} \left(\frac{1-x}{\log(1/x)} \right)^n & (\text{by \lemref{jag}}) \\
= & (1+\ve) \exp \left( \frac{s(-1-\mu_n)}{\sigma_n} + \frac{ns}{2\sigma_n} + \frac{n s^2}{24 \sigma^2_n} + O(n^{-1/2}) \right) & (\text{by \eqref{hulu}}) \\  
= & (1+\ve) \exp \left( \frac{s(-1+O(n^{-1}))}{\sigma_n} + \frac{s^2}{2+O(n^{-3})} + O(n^{-1/2}) \right) & (\text{by \eqref{khak}})
\end{align*}
Taking the $\limsup$ as $n \to \infty$ and then letting $\ve \to 0$, we obtain
$$
\limsup_{n \to \infty} H_n \leq e^{s^2/2}.
$$
This verifies \eqref{laboo} and completes the proof. 
\end{proof}

\end{document}